\documentclass[english]{article}
\usepackage[T2A]{fontenc}
\usepackage[cp1251]{inputenc}
\usepackage[english]{babel}
\usepackage{amsmath, amsfonts, amsthm, amssymb, amscd}
\usepackage[dvips]{graphicx}

\usepackage{latexsym}

\oddsidemargin=30pt \topmargin=0pt \textwidth=14cm

\DeclareMathOperator{\link}{link} 
 
 \DeclareMathOperator{\pt}{pt}

\newcommand{\zk}{\mathcal{Z}_K}
\newcommand{\zkr}{_\Ro\mathcal{Z}_K}
\newcommand{\zl}{\mathbb{Z}^l}
\newcommand{\Zo}{\mathbb{Z}}
\newcommand{\Ro}{\mathbb{R}}
\newcommand{\Co}{\mathbb{C}}
\newcommand{\Zt}{\mathbb{Z}_2}
\newcommand{\rr}{r_{\Ro}}
\newcommand{\sr}{s_{\Ro}}
\newcommand{\Ur}{{_{\Ro}U}}

\newcounter{stmcounter}
\newcounter{thcounter}

\newtheorem*{conj}{Conjecture}
\newtheorem*{cor}{Corollary}

\newtheorem{theorem}[thcounter]{Theorem}
\newtheorem{prop}[stmcounter]{Proposition}
\newtheorem{lemma}[stmcounter]{Lemma}
\newtheorem*{defin}{Definition}
\newtheorem*{probl}{Problem}


\begin{document}

\author{Anton Ayzenberg}
\title{The problem of Buchstaber number and its combinatorial aspects}
\date{}

\maketitle

\begin{abstract}
For any simplicial complex on $m$ vertices a moment-angle complex
$\zk$ embedded in $\Co^m$ can be defined. There is a canonical
action of a group $T^m$ on $\zk$, but this action fails to be
free. The Buchstaber number is the maximal integer $s(K)$ for
which there exists a subtorus of rank $s(K)$ acting freely on
$\zk$. The similar definition can be given for real Buchstaber
number. We study these invariants using certain sequences of
simplicial complexes called universal complexes. Some general
properties of Buchstaber numbers follow from combinatorial
properties of universal complexes. In particular, we investigate
the additivity of Buchstaber invariant.
\end{abstract}

\section{Introduction}

We recall first that an (abstract) simplicial complex is a finite
set $M$ with a system of its subsets $K\subseteq 2^M$ such that:

1) If $\sigma\in K$ and $\tau\subset \sigma$, then $\tau \in K$;

2) Any singleton $\{v\}$ lies in $K$.

Elements of $M$ are called vertices. Sets from $K$ are called
simplices. We will use a notation $V(K)$ for the underlying set of
vertices $M$. The number $\dim\sigma = |\sigma| - 1$ is called the
dimension of a simplex, though we use both dimension and
cardinality. The dimension of the complex is, by definition, the
maximal dimension of its simplices.

There is a method for constructing topological spaces from another
spaces using combinatorial structure of a given simplicial
complex.

\begin{defin}
Let $K$ be a simplicial complex on $m$ vertices and $(X,A)$ be a
pair of topological spaces, $A\subseteq X$. For any simplex
$\sigma\in K$ we define the subset $(X,A)^\sigma\subset X^m$,
$(X,A)^\sigma = \{(x_1,\ldots,x_m)\in X^m, x_i\in A$, if
$i\notin\sigma\}$. Then the $K$-power of the pair $(X,A)$ is a
topological space defined as
$$
(X, A)^K = \bigcup\limits_{\sigma\in K}(X,A)^\sigma \subseteq X^m.
$$
\end{defin}

The two cases we are especially interested in are as follows:

\begin{defin}
A space $\zk = (D^2, S^1)^K$ is called a moment-angle complex of
the simplicial complex $K$. There $D^2$ denotes 2-dimensional disk
and $S^1$ --- its boundary circle.

A space $\zkr = (I, S^0)^K$ is called a real moment-angle complex
of the simplicial complex $K$. There $I$ denotes the closed
interval and $S^0$ --- its boundary.
\end{defin}

The homotopy types of spaces $\zk$ and $\zkr$ were originally
introduced by Davis and Januszkievicz in their pioneer work
\cite{DJ}. But the interpretation of these spaces as $K$-powers is
due to Buchstaber and Panov \cite{BP}.

We suppose that a pair $(D^2, S^1)$ is represented by the unitary
disk and its boundary circle in $\mathbb{C}$. So the complex $\zk$
is considered to be embedded in $\mathbb{C}^m$. Similarly the
interval $I$ can be represented as a subset $[-1,1]$ of a real
line so $\zkr$ is considered as a subspace of $\Ro^m$.

The coordinatewise action of a torus $T^m$ on $\mathbb{C}^m$
preserves $\zk$ as a subset. Thus an $m$-torus acts on $\zk$ also.
Such an action have stabilizers. To ensure their existence
consider the coordinate subgroup of a torus $T^\sigma\subset T^m$
for any simplex $\sigma\in K$. It preserves points $(x_1,\ldots,
x_m)\in \mathbb{C}^m$, where $x_i = 0$ for $i\in \sigma$ and $x_i
\neq 0$ otherwise. These points lie in $\zk\subset \mathbb{C}^m$.
So the subgroups $T^\sigma$ are stabilizers. It also can be shown
that there are no other stabilizers of this particular action. So
we see that an action of a torus is not free.

In a similar way the coordinatewise action of $\Zt^m$ on $\Ro^m$
(by multiplying each coordinate by $\pm1$) preserves the subset
$\zkr$ and thus can be restricted to this subset. The same
reasoning as in the complex case shows that stabilizers are given
by subgroups $\Zt^\sigma\subset \Zt^m$ for $\sigma\in K$.

Now we give the basic definition.

\begin{defin}
A (complex) Buchstaber number $s(K)$ is a maximal dimension of
toric subgroups in $T^m$ acting freely on $\zk$.

A real Buchstaber number $\sr(K)$ is a maximal rank of subgroups
in $\Zt^m$ acting freely on $\zkr$.
\end{defin}

So, informally speaking, the Buchstaber number shows a degree in
which the standard action of a torus (or a subgroup) fails to be
free. Now we formulate a problem posed by V.M.Buchstaber.

\begin{probl}
Find a combinatorial description of $s(K)$.
\end{probl}

The real Buchstaber number was first introduced by Yukiko Fukukawa
and Mikiya Masuda in \cite{FM}.

The aim of this article is to investigate Buchstaber number $s(K)$
of simplicial complex $K$ by combinatorial means. More
combinatorial definition of this invariant will be discussed. We
consider the sequence of universal simplicial complexes $\{U_l\},
l=1,2,\ldots$ introduced in \cite{DJ} and seek the minimal number
$r$ for which there exists a nondegenerate simplicial map from $K$
to $U_r$. Then such a number is connected with the Buchstaber
number by the simple formula $r = m - s(K)$, where $m$ is the
number of vertices of $K$. We can take any other sequence of
simplicial complexes, and it will give another invariant of $K$ in
the same way. For example, the sequence of simplices gives a
chromatic number. Using these arguments we prove a few estimations
of the Buchstaber number. Some results repeat the results of
Erokhovets \cite{Er} concerning the Buchstaber number of simple
polytopes. We will prove them using another approach.

An exact formula $$\rr(\Gamma) = r(\Gamma) =
\lceil\log_2(\gamma(\Gamma)+1)\rceil$$ connecting the Buchstaber
number and the chromatic number of $1$-dimensional simplicial
complexes is proved in the work. We also discuss additive
properties of Buchstaber number. The conjecture was that
$\rr(K\ast N) = \rr(K)+\rr(N)$ and $r(K\ast N) = r(K)+r(N)$ for
any complexes $K$ and $N$. There are many examples when this
formula holds true, but we provide a counterexample to this
conjecture.

I am grateful to Victor Buchstaber for his tasks and advices. Also
I wish to thank G\`{e}ry Debongnie for useful discussions and
comments and Nicolai Erokhovets for the attention to my work.

\section{Basic constructions}

We need a key to investigate those toric subgroups which act
freely on $\zk$. Obviously, a subgroup acting freely on $\zk$ is a
subgroup which intersects any stabilizer only in the unit. This
can be used to give an equivalent definition of Buchstaber number.
But for further considerations we need one more definition.

\begin{defin}
Consider $l$-dimensional coordinate integral lattice $\zl$. A set
of vectors $v_1, \ldots, v_k \in \zl$ is called unimodular if the
map $p: \mathbb{Z}^k\rightarrow \zl$, $p(e_i) = v_i$ is an
isomorphism to the direct summand of $\zl$. In other words, a set
$v_1, \ldots, v_k$ is a part of some basis of the lattice $\zl$.

Consider $l$-dimensional coordinate vector space $\Zt^l$. A set of
vectors $v_1, \ldots, v_k \in \Zt^l$ is called unimodular if it is
linearly independent set.

A characteristic map (real characteristic map) of a simplicial
complex $K$ to the lattice $\zl$ (resp. to $\Zt^l$) is such a map
$\Lambda: V(K)\rightarrow \zl$ (resp. $\Lambda: V(K)\rightarrow
\Zt^l$) that for any simplex $\sigma\in K$ the set of vectors
$\{\Lambda(i), i\in\sigma\}$ is unimodular.
\end{defin}

It is clear that a (real) characteristic map of $K$ to $\zl$
(resp. to $\Zt^l$) may not exist for any number $l$. For example,
if $l$ is less than or equal to $\dim(K)$, then obviously no such
map can be constructed. But for $l=m$ such a characteristic map
exists. Take, for example, $\Lambda(i) = e_i$, where $\{e_i\}$ is
a basis of a lattice (or a vector space over $\Zt$ in the real
case).

The statement below can be found in \cite{BP} in the case of
$m-\dim(K)$-dimensional subgroups. The general case is similar to
the one discussed in \cite{BP}.

\begin{prop}
There is one-to-one correspondence between $r$-dimensional toric
subgroups of $T^m$ acting freely on $\zk$ and characteristic maps
from $K$ to $\mathbb{Z}^{m-r}$, where $m$ is the number of
vertices of $K$.

There is one-to-one correspondence between subgroups of $\Zt^m$ of
rank $r$ acting freely on $\zkr$ and real characteristic maps from
$K$ to $\Zt^{m-r}$.
\end{prop}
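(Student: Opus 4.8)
The plan is to reduce freeness of a subgroup to its trivial intersection with every stabilizer, and then to translate that intersection condition, via the quotient projection, into the unimodularity condition defining a characteristic map. First I would record the elementary reduction. Since the stabilizers of the $T^m$-action on $\zk$ are exactly the coordinate subgroups $T^\sigma$, $\sigma\in K$ (and each such $T^\sigma$ genuinely occurs, being the stabilizer of the point with $x_i=0$ for $i\in\sigma$ and $|x_i|=1$ otherwise), a subgroup $H\subseteq T^m$ acts freely iff $H\cap T^\sigma=\{1\}$ for every $\sigma\in K$. Next I set up the correspondence. An $r$-dimensional subtorus $H$ is the same datum as a rank-$r$ primitive sublattice (direct summand) $N\subset\Zo^m$, namely the integral points of $\operatorname{Lie}(H)$; the quotient lattice $\Zo^m/N$ is then free of rank $m-r$. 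Fixing an identification $\Zo^m/N\cong\Zo^{m-r}$ and letting $p\colon\Zo^m\to\Zo^{m-r}$ be the projection, I define $\Lambda_H(i)=p(e_i)$. Conversely, a characteristic map $\Lambda$ extends linearly to $\bar\Lambda\colon\Zo^m\to\Zo^{m-r}$, inducing a homomorphism of tori $T^m\to T^{m-r}$, and I take $H_\Lambda$ to be the identity component of its kernel.

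The heart of the argument is the following lemma, which I expect to be the main obstacle: for the coordinate sublattice $\Zo^\sigma=\langle e_i:i\in\sigma\rangle$ one has $H\cap T^\sigma=\{1\}$ if and only if $\{p(e_i):i\in\sigma\}$ is unimodular in $\Zo^m/N$. Indeed, $H\cap T^\sigma$ is the kernel of the composite $T^\sigma\hookrightarrow T^m\twoheadrightarrow T^m/H\cong T^{m-r}$, which is the torus homomorphism induced by $\psi\colon\Zo^\sigma\to\Zo^{m-r}$, $e_i\mapsto p(e_i)$. The point is the standard criterion that a torus homomorphism induced by a lattice map $\psi$ is injective exactly when $\psi$ is injective with torsion-free cokernel, i.e.\ when $\psi$ has trivial kernel and $\psi(\Zo^\sigma)$ is a direct summand; and this is precisely unimodularity of $\{p(e_i):i\in\sigma\}$. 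I would prove the criterion by the short computation $\ker=\psi_\Ro^{-1}(\Zo^{m-r})/\Zo^\sigma$, checking that primitivity of the image forces $\psi_\Ro^{-1}(\Zo^{m-r})=\Zo^\sigma$, and conversely that a nonprimitive image or a nontrivial kernel produces a nonzero rational point in the kernel.

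Combining the reduction with the lemma applied to every $\sigma\in K$ yields that $H$ acts freely iff $\Lambda_H$ is a characteristic map. The two constructions are mutually inverse once one is careful with the bookkeeping: the projection $p$ is surjective, so $\Lambda_H$ has linearly spanning image, and conversely for a spanning $\Lambda$ the sublattice $\ker\bar\Lambda$ is primitive of rank $m-(m-r)=r$; the only indeterminacy is the choice of identification $\Zo^m/N\cong\Zo^{m-r}$, so the correspondence is canonical up to $\operatorname{GL}(m-r,\Zo)$ acting on the target. Finally, the real case is entirely analogous and in fact simpler: one replaces $T^m,\Zo$ by $\Zt^m,\Zt$, the stabilizers by $\Zt^\sigma$, and the lemma by the elementary fact that over the field $\Zt$ the induced map $\Zt^\sigma\to\Zt^{m-r}$ is injective iff $\{p(e_i):i\in\sigma\}$ is linearly independent, which is exactly unimodularity in the real sense, so no torsion or primitivity subtleties arise.
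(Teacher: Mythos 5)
The paper gives no proof of this proposition at all, deferring to \cite{BP}; your argument is correct and is essentially the standard one from that reference: freeness is reduced to trivial intersection with the coordinate stabilizers $T^\sigma$, and that condition is translated, via the quotient of $\Zo^m$ by the primitive sublattice of $H$, into unimodularity of $\{p(e_i)\}_{i\in\sigma}$. Your treatment of the two genuine subtleties --- the kernel computation $\psi_\Ro^{-1}(\Zo^{m-r})/\Zo^\sigma$ behind the injectivity criterion, and the restriction to spanning characteristic maps considered up to $\mathrm{GL}(m-r,\Zo)$ needed to make the correspondence literally one-to-one --- is sound.
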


We have the trivial corollary.

\begin{cor} \label{buch}
Let $r(K)$ be the minimal number $l$ such that there exists a
characteristic map from $K$ to $\zl$ and $\rr(K)$ be the minimal
number $l$ for which there exists a real characteristic map from
$K$ to $\Zt^l$. Then $s(K) = m - r(K)$ and $\sr(K) = m - \rr(K)$.
\end{cor}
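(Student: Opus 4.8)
The plan is to deduce both equalities directly from the bijective correspondence established in the preceding Proposition, translating the extremal problem defining $s(K)$ (a maximum over subgroup dimensions) into the extremal problem defining $r(K)$ (a minimum over lattice dimensions) via the substitution $l = m - r$.

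First I would record the consequence of the Proposition in exactly the form needed. The Proposition gives a one-to-one correspondence between $r$-dimensional toric subgroups of $T^m$ acting freely on $\zk$ and characteristic maps from $K$ to $\mathbb{Z}^{m-r}$. For the present purpose only the existence part matters: for each $r$ with $0\le r\le m$, an $r$-dimensional subgroup acting freely on $\zk$ exists if and only if a characteristic map $\Lambda: V(K)\to\mathbb{Z}^{m-r}$ exists. Writing $l = m-r$, this reads: a characteristic map to $\zl$ exists if and only if an $(m-l)$-dimensional subgroup acts freely on $\zk$.

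Next I would compare the two extremal quantities. By the Definition of the Buchstaber number, $s(K)$ is the maximum of $r$ over all $r$ for which some $r$-dimensional subgroup acts freely, whereas $r(K)$ is the minimum of $l$ over all $l$ admitting a characteristic map to $\zl$. The equivalence from the previous step identifies the set of admissible values of $r$ with the set of values $m-l$, as $l$ ranges over admissible lattice dimensions; since $r\mapsto m-r$ is an order-reversing bijection of $\{0,\ldots,m\}$, the maximum of the admissible $r$ equals $m$ minus the minimum of the admissible $l$. Hence $s(K) = m - r(K)$. The real statement $\sr(K) = m - \rr(K)$ follows by the identical argument, replacing the complex correspondence by its real analogue, $\zk$ by $\zkr$, $T^m$ by $\Zt^m$, and characteristic maps to $\zl$ by real characteristic maps to $\Zt^l$.

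The only point requiring a word of care, and it is a mild one, is that both extremal sets are nonempty, so that the maximum and minimum are actually attained. This is guaranteed by the basis assignment $\Lambda(i)=e_i$, which furnishes a characteristic map to $\mathbb{Z}^m$ (respectively to $\Zt^m$), as noted just after the definition of characteristic maps. Given this, no genuine obstacle remains: the statement is purely a matter of transporting the bijection of the Proposition through the substitution $l = m-r$.
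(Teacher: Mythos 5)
Your proposal is correct and is precisely the routine substitution argument the paper has in mind when it calls this a ``trivial corollary'' of the preceding Proposition (the paper offers no written proof). Your extra remark on nonemptiness via $\Lambda(i)=e_i$ is a sensible, if minor, addition.
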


Further on we use the numbers $r$ and $\rr$ instead of $s$ and
$\sr$ bearing in mind that the original Buchstaber numbers can be
calculated from the former ones by the simple formula from the
corollary.

Characteristic maps can be treated more geometrically. For this we
use a notion of a universal complex introduced by Davis and
Januszkievicz in \cite{DJ}. First we construct a simplicial
complex $U_l$. Let the primitive nonzero vectors $v\in \zl$ (those
are the vectors whose coordinates are relatively prime) be
vertices of $U_l$. All unimodular sets define simplices of $U_l$.
The dimension of the complex $U_l$ is $l-1$ since the maximal
cardinality of a unimodular set in $\zl$ is $l$. Note that the
maximal simplices in $U_l$ correspond to the bases of the lattice
$\zl$. Using this space $U_l$ a characteristic map from $K$ to
$\zl$ can be treated as a nondegenerate simplicial map from $K$ to
$U_l$. In a real case we also define a simplicial complex $\Ur_l$
whose vertices are nonzero vectors of $\Zt^l$ and whose simplices
are unimodular (in other words linearly independent) sets of
vectors. The maximal simplices are bases of the space $\Zt^l$. In
terms of this space a real characteristic map from complex $K$ to
$\Zt^l$ is just a nondegenerate simplicial map from $K$ to
$\Ur_l$.

Note that in the complex case there exists an antipodal map on the
complex $U_l$ converting $v\in \zl$ to $-v\in\zl$. In \cite{DJ}
characteristic maps are considered modulo this antipodal map, and
the definition of the universal complex slightly differs from the
one given above. But this difference makes no sense in what we are
going to do.

The term "universal" originally came from the observation by Davis
and Januszkievicz that there is a certain object in the category
of quasitoric spaces (small covers in the real case) over
simplicial complexes which is similar to universal spaces for
principal bundles. This universal object have $U_l$ ($\Ur_l$ in
the case of small covers) as its base.

We give another reformulation in the real case. One may consider
nonzero points of $\Zt^l$ as points of $l-1$-dimensional
projective space $\Zt P^{l-1}$ over the field $\Zt$. Then a set of
vectors is unimodular iff corresponding points of $\Zt P^{l-1}$
are affinely independent. Thus simplices of $\Ur_l$ can be thought
of as "simplices" in finite projective geometry $\Zt P^{l-1}$. A
nondegenerate simplicial map from complex $K$ to $\Ur_l$ can be
considered as an "immersion" of a complex $K$ into a finite
geometry $\Zt P^{l-1}$. Such a point of view makes transparent the
connection between problem posed by Buchstaber and a classical
theory of immersions.

\section{Invariants defined by sequences}
In this section we provide a proof for some results on the
Buchstaber number using a general technique. A different proof of
some of these facts can be found at \cite{Er}, \cite{Izm}.

Let $\{L_i, i = 1,2,\ldots\}$ be a sequence of simplicial
complexes such that $L_i$ can be mapped nondegenerately to $L_j$
for $i<j$. Such a sequence will be called an increasing sequence.
Consider an arbitrary simplicial complex $K$. For any increasing
sequence $\{L_i, i=1,2,\ldots\}$ we define a number $L(K)$ as the
smallest number $l$ for which there exists a nondegenerate map
from $K$ to $L_l$. If there is no such number, we set $L(K) =
\infty$. We say that an increasing sequence $\{L_i\}$ defines an
invariant $L$ of a simplicial complex.

\textit{Example.} We have already seen in the previous section
that the increasing sequences of universal complexes $\{U_i,
i=1,2,\ldots\}$ and $\{\Ur_i, i=1,2,\ldots\}$ define the
invariants $r$ and $\rr$ respectively. Consider the sequence
$\{L_i = \Delta_{i-1}, i=1,2,\ldots\}$ of simplices. This sequence
defines a chromatic number $\gamma$. Indeed, a nondegenerate map
from $K$ to $\Delta_{i-1}$ corresponds to the coloring of vertices
by $i$ paints in which no two adjacent vertices are colored by the
same paint. Another example is given by the sequence
$\{\Delta_\infty^{(i-1)}, i=1,2,\ldots\}$ consisting of complexes
$\Delta_\infty^{(i-1)}$ with infinitely (countably) many vertices
and maximal simplices defined by all $i$-subsets of vertices. This
sequence defines an invariant equal to the dimension plus 1.

\begin{prop}\label{series}
Let $\{L^1_i\}$ and $\{L_i^2\}$ be increasing sequences and $L^1$
and $L^2$ be the invariants defined by these sequences. Suppose
that for each $i$ there exists a nondegenerate map $g_i$ from
$L_i^1$ to $L_i^2$. Then for each simplicial complex $K$ there
holds $L^1(K)\geqslant L^2(K)$.
\end{prop}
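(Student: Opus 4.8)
The plan is to reduce the whole statement to a single elementary fact: the composite of two nondegenerate simplicial maps is again nondegenerate. Recall that a simplicial map is nondegenerate precisely when its restriction to every simplex is injective, equivalently when it carries each simplex to a simplex of the same dimension. So I would first record this composability as a preliminary observation. Given nondegenerate maps $f \colon K \to L$ and $g \colon L \to M$ and a simplex $\sigma \in K$, the map $f$ sends $\sigma$ injectively onto a simplex of $L$, and $g$ in turn restricts injectively to that image; hence $g \circ f$ is injective on $\sigma$. Since $\sigma$ is arbitrary, $g \circ f$ is nondegenerate.

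With this tool in hand, I would set $l = L^1(K)$ and treat the trivial case first: if $l = \infty$, then $L^1(K) \geqslant L^2(K)$ holds automatically, so I may assume $l$ is finite. By the very definition of $L^1(K)$ there is a nondegenerate map $f \colon K \to L^1_l$, and by hypothesis there is a nondegenerate map $g_l \colon L^1_l \to L^2_l$ for this same index $l$. The composite $g_l \circ f \colon K \to L^2_l$ is then nondegenerate by the observation above.

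The existence of this composite exhibits $l$ as one index for which a nondegenerate map $K \to L^2_l$ exists. Since $L^2(K)$ is defined to be the smallest such index, I conclude $L^2(K) \leqslant l = L^1(K)$, which is exactly the assertion. Notice that the argument never invokes the internal maps making $\{L^1_i\}$ or $\{L^2_i\}$ increasing; only the cross-maps $g_l$ and the definitions of the invariants are used.

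I expect no genuine obstacle. The entire content sits in the composability lemma, which is a direct dimension-and-injectivity check, and the only point demanding a word of care is the boundary case $L^1(K) = \infty$, which must be handled separately because the definition assigns the value $\infty$ exactly when no finite witness exists.
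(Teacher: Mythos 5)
Your argument is correct and is essentially the paper's own proof: compose a nondegenerate map $f\colon K\to L^1_l$ at the index $l=L^1(K)$ with the cross-map $g_l$ and invoke minimality of $L^2(K)$. You merely spell out the composability of nondegenerate maps and the trivial $L^1(K)=\infty$ case, which the paper leaves implicit.
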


\begin{proof}
For any nondegenerate map $f$ from $K$ to $L_i^1$ the composition
$g_i\circ f$ is a nondegenerate map from $K$ to $L_i^2$.
Substituting $i=L^1(K)$ leads to the required estimation.
\end{proof}

The estimation for Buchstaber number and chromatic invariant (see
\cite{Izm}), and the estimation for real and complex Buchstaber
numbers (\cite{FM}) can be deduced from proposition~\ref{series}.

\begin{prop} \label{buchchrom}
For any complex $K$ there holds $$\dim(K)+1\leqslant
\rr(K)\leqslant r(K)\leqslant\gamma(K).$$
\end{prop}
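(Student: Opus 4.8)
The plan is to prove the chain of inequalities $\dim(K)+1\leqslant \rr(K)\leqslant r(K)\leqslant\gamma(K)$ by establishing each of the three comparisons separately, two of them as direct applications of Proposition~\ref{series} and the remaining one by a direct construction.

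First I would verify the leftmost inequality $\dim(K)+1\leqslant \rr(K)$ using Proposition~\ref{series}. The invariant $\dim(K)+1$ is defined by the increasing sequence $\{\Delta_\infty^{(i-1)}\}$, as noted in the example preceding the statement, and the real Buchstaber invariant $\rr$ is defined by $\{\Ur_i\}$. To apply the proposition in the direction that yields $\dim(K)+1\leqslant\rr(K)$, I need a nondegenerate map $g_i\colon \Ur_i\to\Delta_\infty^{(i-1)}$ for each $i$. Since $\Ur_i$ has dimension $i-1$, and $\Delta_\infty^{(i-1)}$ contains every $i$-subset of its (infinite) vertex set as a simplex, any injection on vertices sending the (finitely many) vertices of $\Ur_i$ to distinct vertices of $\Delta_\infty^{(i-1)}$ is automatically simplicial and nondegenerate, because every set of at most $i$ vertices is a simplex in $\Delta_\infty^{(i-1)}$. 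This gives the desired $g_i$ and hence the bound.

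Next I would handle the rightmost inequality $r(K)\leqslant\gamma(K)$, again via Proposition~\ref{series}. Here $r$ is defined by $\{U_i\}$ and $\gamma$ by $\{\Delta_{i-1}\}$. I need nondegenerate maps $g_i\colon \Delta_{i-1}\to U_i$. The complex $\Delta_{i-1}$ has $i$ vertices and its unique maximal simplex is the full vertex set; a nondegenerate map into $U_i$ amounts to choosing $i$ primitive vectors in $\zl[i]$ forming a unimodular set, i.e.\ a basis. Sending the $i$ vertices to the standard basis $e_1,\ldots,e_i$ works, since any subset of a basis is unimodular and distinct basis vectors are distinct vertices of $U_i$. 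This yields $g_i$ and hence $r(K)\leqslant\gamma(K)$.

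The middle inequality $\rr(K)\leqslant r(K)$ is the one I expect to require a little more care, since it compares the complex and real universal sequences and there is a natural reduction map rather than an inclusion. The idea is to produce nondegenerate maps $g_i\colon U_i\to\Ur_i$ compatible with Proposition~\ref{series}. The natural candidate is reduction modulo $2$, sending a primitive integer vector $v\in\zl[i]$ to its class $\bar v\in\Zt^i$. I must check two things: that $\bar v$ is nonzero (so that it is a genuine vertex of $\Ur_i$), and that a unimodular integer set maps to a linearly independent set over $\Zt$. A primitive vector has coprime coordinates, so at least one coordinate is odd and $\bar v\neq 0$. For the second point, a unimodular set extends to an integer basis, and reduction mod $2$ carries a basis of $\zl[i]$ to a basis of $\Zt^i$; restricting, a unimodular subset maps to a linearly independent subset, so the map is simplicial. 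The one subtlety is nondegeneracy on vertices: reduction mod $2$ is not injective on primitive vectors in general (for instance $v$ and $-v$ coincide mod $2$), but within any single simplex of $U_i$ the images are distinct precisely because they remain linearly independent, which is exactly the condition guaranteeing that the simplicial map does not collapse that simplex. Hence each $g_i$ is nondegenerate, and Proposition~\ref{series} gives $\rr(K)\leqslant r(K)$, completing the chain.
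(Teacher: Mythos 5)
Your proof is correct and follows essentially the same route as the paper: the same chain of nondegenerate maps $\Delta_{i-1}\to U_i\to \Ur_i\to\Delta_\infty^{(i-1)}$ (inclusion of a maximal simplex, reduction modulo $2$, and an arbitrary vertex injection into the $(i-1)$-skeleton of the infinite simplex) fed into Proposition~\ref{series}. You merely spell out a couple of details the paper leaves implicit, such as the nonvanishing of the mod-$2$ reduction of a primitive vector and the vertex-injectivity of the middle map on each simplex.
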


\begin{proof}
We prove that for each $i$ there exist nondegenerate maps
$$\Delta_{i-1}\to U_i \to \Ur_i\to\Delta_\infty^{(i-1)}.$$ For the first
map take for example an inclusion of any maximal simplex into
$U_i$. The existence of the last map is also obvious since the
dimension of $\Ur_i$ is $i-1$. We have to construct a map in the
middle. Recall that vertices of $U_i$ are nonzero primitive
vectors of $\Zo^i$ and vertices of $\Ur_i$ are nonzero vectors of
$\Zt^i$. We define a map from $U_i$ to $\Ur_i$ on vertices by
reduction modulo two. It is clear that maximal simplices in $U_i$
(which are bases of the integral lattice) go to maximal simplices
of $\Ur_i$ (which are bases of the corresponding vector space over
$\Zt$). So the constructed map is simplicial and nondegenerate.

The assertion of the proposition now follows from proposition
\ref{series}.
\end{proof}

Next statement in the case of Buchstaber number was originally
observed by Nicolai Erokhovets in \cite{Er}. The proof of this
proposition is similar to that of proposition \ref{series} and
thus omitted.

\begin{prop}\label{estgen}
Let $\mathcal{L}$ be an invariant defined by some increasing
sequence $\{L_i\}$. Let $K$ and $N$ be simplicial complexes and
there is a nondegenerate map from $K$ to $N$. Then
$\mathcal{L}(K)\leqslant\mathcal{L}(N)$.
\end{prop}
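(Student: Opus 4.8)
The plan is to mimic the proof of Proposition~\ref{series}, exploiting the fact that nondegeneracy of simplicial maps is preserved under composition. Given the hypothesized nondegenerate map $h\colon K\to N$, I would take an optimal nondegenerate map realizing $\mathcal{L}(N)$ and precompose it with $h$, thereby producing a nondegenerate map out of $K$ into the very same term of the sequence that witnesses $\mathcal{L}(N)$.

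Concretely, I would first dispose of the trivial case $\mathcal{L}(N)=\infty$, in which the claimed inequality holds automatically. Assuming $\mathcal{L}(N)=l<\infty$, by definition there is a nondegenerate simplicial map $f\colon N\to L_l$, and I would consider the composite $f\circ h\colon K\to L_l$. The one point with any content is to check that this composite is again nondegenerate. Recalling that a simplicial map is nondegenerate exactly when it is injective on the vertices of each simplex, I note that $h$ is injective on the vertices of every $\sigma\in K$ while $f$ is injective on the vertices of the simplex $h(\sigma)\in N$; hence $f\circ h$ is injective on the vertices of $\sigma$, so it is nondegenerate.

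Having produced a nondegenerate map $f\circ h\colon K\to L_l$, the definition of $\mathcal{L}(K)$ as the least index admitting a nondegenerate map from $K$ gives $\mathcal{L}(K)\leqslant l=\mathcal{L}(N)$, as required. I do not expect a genuine obstacle: the argument is a single composition, and the closure of nondegeneracy under composition is routine. It is worth observing that the increasing-sequence hypothesis on $\{L_i\}$ plays no role in this particular estimate; only the definition of $\mathcal{L}$ and the composability of nondegenerate maps are used.
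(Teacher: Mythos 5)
Your proof is correct and is essentially the argument the paper intends: the paper omits the proof of this proposition, noting it is "similar to that of proposition \ref{series}", and that proof is exactly the composition-of-nondegenerate-maps argument you give (precompose an optimal map $N\to L_{\mathcal{L}(N)}$ with the given map $K\to N$ and observe the composite is still injective on each simplex). Your added remarks --- handling $\mathcal{L}(N)=\infty$ and noting that the increasing-sequence hypothesis is not actually needed here --- are accurate refinements, not deviations.
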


This observation happens to be very useful. For example there is a
nondegenerate map from any given complex $K$ to
$\Delta_{\gamma-1}$ where $\gamma = \gamma(K)$, which follows from
the description of the chromatic number given in example. Note
that this nondegenerate map can be viewed as a map from $K$ to
$\Delta_{\gamma-1}^{(\dim(K))}$. Thus we have the estimations
$r(K)\leqslant r(\Delta_{\gamma-1}^{(\dim(K))})$ and
$\rr(K)\leqslant \rr(\Delta_{\gamma-1}^{(\dim(K))})$ by the
previous statement. These estimations involve the evaluation of
the Buchstaber number for the skeletons of simplices. But even
this particular case is a challenge. For an extensive information
on the real Buchstaber number of skeletons of simplices see
\cite{FM}.

We use the proposition \ref{estgen} in a different way by changing
sequences under consideration. By definition, there exists a
nondegenerate map from a complex $K$ to $U_{r(K)}$ and a
nondegenerate map from $K$ to $\Ur_{\rr(K)}$. So we have
\begin{equation}\label{estchrom}
\gamma(K)\leqslant\gamma(U_{r(K)}),
\end{equation}
\begin{equation}
\gamma(K)\leqslant\gamma(\Ur_{\rr(K)})
\end{equation}
by proposition \ref{estgen}. To obtain more transparent formulae
chromatic numbers of universal complexes should be found. This can
be done directly, but we prefer to formulate a few lemmas of an
independent interest.

We call two simplicial complexes $K$ and $N$ \textit{equivalent}
if there exist nondegenerate maps from $K$ to $N$ and from $N$ to
$K$. In these terms the invariant defined by any sequence of
complexes is invariant under this equivalence relation.

\begin{lemma}\label{equiv2}
For any $l\geqslant1$ the complex $U_l^{(2)}$ is equivalent to the
complex $\Ur_l^{(2)}$.
\end{lemma}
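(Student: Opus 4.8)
The plan is to exhibit nondegenerate simplicial maps in both directions between the $2$-skeleta $U_l^{(2)}$ and $\Ur_l^{(2)}$, which by definition of equivalence is exactly what must be proved. For the map $U_l^{(2)}\to\Ur_l^{(2)}$ I would simply restrict the reduction-modulo-two map already built in the proof of Proposition~\ref{buchchrom}. That map sends a primitive vector $v\in\Zo^l$ to $v\bmod 2\in\Zt^l$ and carries unimodular sets to linearly independent ones; restricting attention to simplices of dimension at most two yields the required nondegenerate map of $2$-skeleta.

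The substance of the lemma is the opposite map $\Ur_l^{(2)}\to U_l^{(2)}$. Here I would use the \emph{standard lift}: send a nonzero $u\in\Zt^l$ to the vector $\tilde u\in\Zo^l$ whose coordinates are the representatives $0$ or $1$ of the coordinates of $u$. Each $\tilde u$ lies in $\{0,1\}^l$ with at least one coordinate equal to $1$, hence is primitive and is a vertex of $U_l$, and the assignment $u\mapsto\tilde u$ is injective. It then remains to check that a $\Zt$-linearly independent set of at most three vectors is carried to a unimodular set, i.e.\ that this lift is nondegenerate on $2$-skeleta.

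The key step, and the main obstacle, is the following determinant observation. If $u_1,\dots,u_k$ with $k\leqslant 3$ are linearly independent over $\Zt$, then there are $k$ coordinate positions at which the corresponding $k\times k$ submatrix of the reduced matrix is invertible over $\Zt$; the integer submatrix formed by $\tilde u_1,\dots,\tilde u_k$ in those positions is then a $0$--$1$ matrix of odd determinant. For $k\leqslant 3$ one verifies directly that the maximal absolute value of the determinant of a $k\times k$ matrix with entries in $\{0,1\}$ equals $1$ for $k\leqslant 2$ and $2$ for $k=3$; in either case an odd determinant is forced to be $\pm 1$. Hence one of the $k\times k$ minors of the $l\times k$ matrix with columns $\tilde u_1,\dots,\tilde u_k$ equals $\pm 1$, so the greatest common divisor of these minors is $1$ and the set $\{\tilde u_1,\dots,\tilde u_k\}$ is unimodular. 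This gives nondegeneracy of the standard lift on vertices, edges and triangles, completing the second map and thereby the equivalence.

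I would close by remarking that this determinant bound is precisely what confines the argument to the $2$-skeleton: for larger $k$ there exist $0$--$1$ matrices of odd determinant exceeding $1$, so the standard lift need no longer send $\Zt$-independent sets to unimodular sets, and the analogous equivalence may fail on higher skeleta.
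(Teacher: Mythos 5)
Your proposal is correct and takes essentially the same route as the paper: one direction is the restriction of the mod-$2$ reduction from the proof of Proposition~\ref{buchchrom}, and the other is the $0$--$1$ standard lift, with the key point in both arguments being that a $k\times k$ zero-one integer matrix with $k\leqslant 3$ and odd determinant must have determinant $\pm1$, hence the lifted set is unimodular. Your closing remark about why the bound confines the equivalence to the $2$-skeleton is a nice addition not present in the paper.
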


\begin{proof}
A nondegenerate map from $U_l$ to $\Ur_l$ was already constructed
in the proof of \ref{buchchrom}. The restriction of this map to
the $2$-skeleton of $U_l$ gives one of the maps required in the
equivalence relation.

Now we construct a nondegenerate map $q$ from $\Ur_l^{(2)}$ to
$U_l^{(2)}$. Let $v\in\mathbb{Z}_2^l$, $v =
(\delta_1,\ldots,\delta_l)$, where $\delta_i = 0$ or $1\mod 2$. We
set $q(v) = (\delta_1,\ldots,\delta_l)$ --- the same row-vector of
zeros and units considered as an integral vector. We should now
check that any $2$-dimensional simplex goes to $2$-dimensional
simplex under $q$. Any $2$-dimensional simplex in $U_l(2)$ is
given by three linearly independent vectors. Let us write down
their coordinates into $3\times l$-matrix

$$A =
\begin{pmatrix}
\varepsilon_{1,1}&\ldots&\varepsilon_{1,i_1}&\ldots&\varepsilon_{1,i_2}&\ldots&\varepsilon_{1,i_3}&\ldots&\varepsilon_{1,l}\\
\varepsilon_{2,1}&\ldots&\varepsilon_{2,i_1}&\ldots&\varepsilon_{2,i_2}&\ldots&\varepsilon_{2,i_3}&\ldots&\varepsilon_{2,l}\\
\varepsilon_{3,1}&\ldots&\varepsilon_{3,i_1}&\ldots&\varepsilon_{3,i_2}&\ldots&\varepsilon_{3,i_3}&\ldots&\varepsilon_{3,l}
\end{pmatrix}
$$

This matrix has a minor which is nonzero over $\mathbb{Z}_2$. Thus
the corresponding $3\times3$ matrix $M$ considered as a matrix
over $\mathbb{Z}$ has an odd determinant. We use a simple fact
that any $3\times 3$ matrix $M$ over $\mathbb{Z}$ filled with
zeros and units satisfies the relation $|\det M|<3$. It now
follows that $\det M = \pm1$. Therefore the matrix $A$ over
$\mathbb{Z}$ has the minor $M$ equal to $\pm1$. This means that
its rows form a part of some basis of a lattice $\zl$. So
$q(\sigma)\in U_l$ for any $2$-dimensional simplex $\sigma\in
U_l(2)$.
\end{proof}

\begin{cor} \label{k2l}
The complexes $U_l^{(1)}$, $\Ur_l^{(1)}$ and complete graph
$K_{2^l-1}$ on $2^l-1$ vertices are equivalent.
\end{cor}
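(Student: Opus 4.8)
The plan is to reduce everything to Lemma~\ref{equiv2} together with one elementary observation about $\Zt$. First I would notice that over the field with two elements any two \emph{distinct} nonzero vectors are automatically linearly independent: the only way $v$ and $w$ can be dependent is $w=0$ or $w=v$. Consequently every pair of distinct vertices of $\Ur_l$ spans a $1$-simplex, so the $1$-skeleton $\Ur_l^{(1)}$ is literally the complete graph on its $2^l-1$ vertices. This identifies $\Ur_l^{(1)}$ with $K_{2^l-1}$ on the nose, which in particular gives the equivalence of these two complexes via the identity map.

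It then remains only to connect $U_l^{(1)}$ to this common graph. Here I would invoke Lemma~\ref{equiv2}, which supplies nondegenerate maps between $U_l^{(2)}$ and $\Ur_l^{(2)}$ in both directions (the reduction modulo $2$ constructed in the proof of \ref{buchchrom} and the lift $q$). The key point is that restricting a nondegenerate simplicial map to a skeleton keeps it nondegenerate: a $1$-simplex is sent to a pair of distinct vertices, hence to an edge of the target that lies in its $1$-skeleton. Thus restricting both maps to $1$-skeleta yields nondegenerate maps $U_l^{(1)}\to\Ur_l^{(1)}$ and $\Ur_l^{(1)}\to U_l^{(1)}$, establishing the equivalence of $U_l^{(1)}$ and $\Ur_l^{(1)}$.

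Finally I would assemble the three equivalences, noting that the equivalence relation is transitive since a composition of nondegenerate maps is again nondegenerate. Combining the identification $\Ur_l^{(1)}=K_{2^l-1}$ with the equivalence $U_l^{(1)}\sim\Ur_l^{(1)}$ closes the chain among all three complexes.

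The argument is almost entirely a formal consequence of Lemma~\ref{equiv2}, so I do not expect a serious obstacle; the only two points deserving care are the $\Zt$-linear-independence observation that collapses $\Ur_l^{(1)}$ to a complete graph, and the verification that restriction to the $1$-skeleton preserves nondegeneracy.
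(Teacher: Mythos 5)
Your proposal is correct and follows essentially the same route as the paper: the identification $\Ur_l^{(1)}=K_{2^l-1}$ via the observation that distinct nonzero vectors over $\Zt$ are linearly independent, combined with Lemma~\ref{equiv2} restricted to $1$-skeleta. You merely spell out the (harmless) verification that restricting a nondegenerate map to a skeleton stays nondegenerate, which the paper leaves implicit.
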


\begin{proof}
By the previous lemma we have $U_l^{(1)}\sim \Ur_l^{(1)}$. But
$\Ur_l^{(1)} = K_{2^l-1}$ because any two different nonzero
vectors from $\mathbb{Z}_2^l$ are linearly independent, thus form
a $1$-simplex in $\Ur_l$.
\end{proof}

Now we can easily find a chromatic number of a universal complex.

\begin{prop}
For any $l\geqslant1$ there holds
$$\gamma(U_l) = \gamma(\Ur_l) = 2^l-1.$$
\end{prop}

\begin{proof}
Chromatic number depends only on the $1$-skeleton of a complex.
Moreover, it is defined by some sequence, therefore it is an
invariant of an equivalence relation. Finally, using previous
corollary we obtain $\gamma(U_l) = \gamma(\Ur_l) =
\gamma(U_l^{(1)}) = \gamma(\Ur_l^{(1)}) = \gamma(K_{2^l-1}) =
2^l-1$.
\end{proof}

Now we may rewrite estimations \ref{estchrom} as
\begin{equation}
\gamma(K)\leqslant 2^{r(K)}-1,
\end{equation}
\begin{equation}
\gamma(K)\leqslant 2^{\rr(K)}-1
\end{equation}
and get the estimation for numbers $r$ and $\rr$
\begin{equation}\label{logchrom}
r(K)\geqslant \lceil\log_2(\gamma(K)+1)\rceil,
\end{equation}
\begin{equation}
\rr(K)\geqslant \lceil\log_2(\gamma(K)+1)\rceil.
\end{equation}
There $\lceil a\rceil$ is the least integer which is greater than
or equal to $a$.

We now prove that this estimation attains for simple graphs
(1-dimensional simplicial complexes).

\begin{theorem}
For any simple graph $\Gamma$ we have a formula
\begin{equation} \label{theorgraph}
\rr(\Gamma) = r(\Gamma) = \lceil\log_2(\gamma(\Gamma)+1)\rceil,
\end{equation}
\end{theorem}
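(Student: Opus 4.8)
The plan is to squeeze all three quantities between matching bounds. From Proposition~\ref{buchchrom} I already have $\rr(\Gamma)\leqslant r(\Gamma)$, and the lower bound $\lceil\log_2(\gamma(\Gamma)+1)\rceil\leqslant\rr(\Gamma)$ is exactly the real version of inequality~\ref{logchrom}. Thus it suffices to establish the single upper bound $r(\Gamma)\leqslant\lceil\log_2(\gamma(\Gamma)+1)\rceil$; the chain of inequalities will then collapse to equalities, and the real case follows for free from the sandwiching.

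The decisive feature of the problem is that $\Gamma$ is one-dimensional, so any nondegenerate map from $\Gamma$ to a universal complex factors through its $1$-skeleton. Write $\gamma=\gamma(\Gamma)$ and put $l=\lceil\log_2(\gamma+1)\rceil$, the least integer with $2^l-1\geqslant\gamma$. First I would translate the chromatic number combinatorially: a proper $\gamma$-coloring of $\Gamma$ is precisely a nondegenerate simplicial map $\Gamma\to K_\gamma$. Since $\gamma\leqslant 2^l-1$, the complete graph $K_\gamma$ includes into $K_{2^l-1}$, yielding a nondegenerate map $\Gamma\to K_{2^l-1}$.

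Next I would invoke Corollary~\ref{k2l}, by which $K_{2^l-1}$ is equivalent to $U_l^{(1)}$; in particular there is a nondegenerate map $K_{2^l-1}\to U_l^{(1)}\subseteq U_l$. Composing the two maps produces a nondegenerate map $\Gamma\to U_l$, which by the correspondence between characteristic maps and nondegenerate maps into universal complexes is a characteristic map of $\Gamma$ to $\zl$. Hence $r(\Gamma)\leqslant l$, and together with the lower bounds this gives
$$\rr(\Gamma)=r(\Gamma)=\lceil\log_2(\gamma+1)\rceil.$$

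I anticipate no serious difficulty here, because the substantive content has been packaged into Corollary~\ref{k2l} (and ultimately into the determinant estimate of Lemma~\ref{equiv2}). The only delicate point is arithmetic bookkeeping: one must verify that $l=\lceil\log_2(\gamma+1)\rceil$ is genuinely the threshold at which $2^l-1$ first reaches $\gamma$, so that the constructed upper bound meets the logarithmic lower bound exactly and no off-by-one gap survives.
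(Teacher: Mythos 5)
Your proof is correct and follows essentially the same route as the paper: a proper coloring gives a nondegenerate map $\Gamma\to K_{\gamma}\to K_{2^l-1}$, Corollary~\ref{k2l} converts this into a map to $U_l$, and the logarithmic lower bound closes the gap. Your only (harmless) refinement is to sandwich $\rr$ between the lower bound and $r$ so the real case falls out automatically, where the paper instead remarks that the same argument applies verbatim to $\rr$.
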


\begin{proof}
Any coloring of a graph $\Gamma$ by $a$ colors defines a
nondegenerate map from $\Gamma$ to $K_a$. Thus there exists a
nondegenerate map from $\Gamma$ to $K_{\gamma(\Gamma)}$. Denote
$\lceil\log_2(\gamma(\Gamma)+1)\rceil$ by $p$. Then
$2^p-1\geqslant \gamma(\Gamma)$ and therefore there exists a
nondegenerate map from graph $\Gamma$ to $K_{2^p-1}$. But
$K_{2^p-1}\sim U_p^{(1)}$ by corollary of lemma \ref{equiv2}. So
there exists nondegenerate map from $\Gamma$ to $U_p$ where $p =
\lceil\log_2(\gamma(\Gamma)+1)\rceil$. This observation shows that
$r(\Gamma) \leqslant \lceil\log_2(\gamma(\Gamma)+1)\rceil$.
Combining it with the estimation $r(\Gamma)\geqslant
\lceil\log_2(\gamma(\Gamma)+1)\rceil$ given by (\ref{logchrom}) we
get the asserted formula. Argumentation is the same in the case of
$\rr$.
\end{proof}

Lemma \ref{equiv2} implies that real and complex Buchstaber
numbers of $2$-dimensional complexes coincide. Indeed, from the
existence of a nondegenerate map from $K$ to $\Ur_l^{(2)}$ follows
the existence of a nondegenerate map from $K$ to $U_l^{(2)}$ and
vice versa. But in the dimension $3$ real and complex Buchstaber
numbers may not coincide as the following proposition shows.

\begin{prop}
There is no nondegenerate map from $\Ur_4$ to $U_4$.
\end{prop}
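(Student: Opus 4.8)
The plan is to reformulate the statement as a lifting problem and then reduce it, after a normalization, to a finite determinant analysis. A nondegenerate simplicial map $f\colon \Ur_4\to U_4$ is recorded on vertices by an assignment $\lambda$ of a primitive integral vector of $\Zo^4$ to each nonzero vector of $\Zt^4$. Since any two distinct vertices of $\Ur_4$ already span an edge, nondegeneracy forces $\lambda$ to be injective. Reducing $\lambda$ modulo $2$ gives a self-map $\bar\lambda$ of the vertex set $\Zt^4\setminus\{0\}$, and I would first check that it sends linearly independent sets to linearly independent sets: an independent set extends to a $\Zt$-basis, whose $\lambda$-image is a $\Zo$-basis, whose reduction mod $2$ is again a $\Zt$-basis. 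Thus $\bar\lambda$ is a nondegenerate self-map of $\Ur_4$, hence by injectivity and finiteness a simplicial automorphism. Precomposing $f$ with $\bar\lambda^{-1}$ I may assume $\lambda(v)\equiv v\pmod 2$ for all $v$; composing further with the matrix $C=(\lambda(e_1)\mid\cdots\mid\lambda(e_4))^{-1}\in GL_4(\Zo)$, which satisfies $C\equiv I\pmod 2$ and so preserves the previous normalization, I may assume in addition that $\lambda(e_i)=e_i$.

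Next I would extract the basic structural constraints. Writing $e_S=\sum_{j\in S}e_j$ for a nonempty $S\subseteq\{1,2,3,4\}$, the set consisting of $e_S$ together with the standard vectors $e_j$, $j\ne i$, is a $\Zt$-basis whenever $i\in S$; its $\lambda$-image is a $\Zo$-basis whose determinant equals, up to sign, the $i$-th coordinate of $\lambda(e_S)$. Hence this coordinate is $\pm1$ for every $i\in S$, while for $i\notin S$ the congruence $\lambda(e_S)\equiv e_S\pmod 2$ forces it to be even. So each $\lambda(e_S)$ carries entries $\pm1$ on its support $S$ and even entries off $S$; in particular $\lambda(e_1+e_2+e_3+e_4)$ has all four entries equal to $\pm1$.

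It remains to derive a contradiction from the unimodularity of the bases that are not of the above special form. My plan is to expand determinants of carefully chosen $\Zt$-bases modulo $4$: since every off-support entry is even, each such determinant is automatically odd, and requiring it to be exactly $\pm1$ rather than $\pm3,\pm5,\dots$ imposes congruences mod $4$ that link the off-support entries to the signs on the supports. The main obstacle, which I expect to be genuinely delicate, is that no single basis is obstructed by itself: if one restricts attention to the seven nonzero vectors of a coordinate hyperplane $\langle e_i,e_j,e_k\rangle$ (a Fano plane inside $\Zt^4$), then completing each of its bases by $e_l$ reduces the condition to a $3\times 3$ determinant in three coordinates, and the naive lift $e_S\mapsto$ (the zero–one vector $e_S$) already satisfies every such condition — precisely by the elementary bound $|\det M|<3$ for $3\times 3$ zero–one matrices used in Lemma~\ref{equiv2}. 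The failure must therefore be four-dimensional: for $4\times 4$ zero–one matrices the determinant can reach $\pm 3$, and indeed the four weight-three vectors $e_2+e_3+e_4,\,e_1+e_3+e_4,\,e_1+e_2+e_4,\,e_1+e_2+e_3$ form a $\Zt$-basis on which the naive lift has determinant $-3$. So one is forced to correct the off-support entries, and the crux is to show these corrections cannot be made consistently. Concretely I would combine the conditions coming from a family of mixed bases — each a basis of the even-weight subspace together with one odd-weight vector, such as $\{e_1+e_2,\,e_1+e_3,\,e_3+e_4,\,e_1\}$, which sees off-support entries from two different hyperplanes at once — and show the resulting system of mod-$4$ congruences is inconsistent. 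A convenient bookkeeping device, and the one I would use to control the proliferation of signs, is to substitute $\lambda(v)=\tilde v+2\phi(v)$, where $\tilde v$ is the zero–one representative of $v$ and $\phi(e_i)=0$, so that the unimodularity conditions become affine constraints on $\phi$ read modulo $2$; verifying their incompatibility is the final step.
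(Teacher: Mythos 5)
Your normalization is correct and is genuinely valuable: the passage to $\bar\lambda=\lambda\bmod 2$, the verification that $\bar\lambda$ is a simplicial automorphism of $\Ur_4$, the reduction to $\lambda(v)\equiv v\pmod 2$ and then to $\lambda(e_i)=e_i$, and the conclusion that each $\lambda(e_S)$ has entries $\pm1$ on its support and even entries off it are all sound. (For what it is worth, the paper gives no proof at all of this proposition --- it states that the argument ``consists in the examination of a big number of possibilities, that can be made partially by a computer search'' --- so there is nothing to compare your reduction against; it would make any such search far smaller.) But the proposal stops exactly where the proposition lives: you never derive a contradiction, you only announce that a certain system of constraints must be shown inconsistent ``as the final step.'' Everything established up to that point is perfectly consistent with the existence of a nondegenerate lift --- it holds verbatim for $\Ur_3\to U_3$, where a lift exists --- so the omitted step is the entire content of the statement.

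Moreover, the mechanism you propose for that step cannot work as described. The condition $\det=\pm1$ imposes no congruence modulo $4$: the set $\{+1,-1\}$ reduces mod $4$ to $\{1,3\}$, which is exactly the reduction of $\{+3,-3\}$, of $\{+5,-5\}$, and so on. Hence knowing each image determinant modulo $4$ (equivalently, knowing your correction $\phi$ modulo $2$) can never separate determinant $\pm1$ from determinant $\pm3$, and no collection of mod-$4$ congruences extracted from individual bases can be inconsistent with unimodularity. What actually produces usable constraints is an exact bounded-difference argument, not a congruence. For instance, for the $\Zt$-basis $\{e_1+e_2,\ e_1+e_3,\ e_1,\ e_4\}$ the image determinant is $\pm(\epsilon\delta-4ac)$, where $\epsilon,\delta$ are on-support signs and $2a=(\lambda(e_1+e_2))_3$, $2c=(\lambda(e_1+e_3))_2$ are off-support entries; setting this equal to $\pm1$ forces $4ac=\epsilon\delta\mp1$, whence $|4ac|\le 2$ and $ac=0$. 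Such conditions are exact integer relations, quadratic in your $\phi$, not affine constraints modulo $2$. A completed proof would have to accumulate enough relations of this type, together with the sign data on supports, to reach a contradiction --- and that accumulation is precisely the large case analysis that the paper delegates to a computer and that your write-up leaves undone.
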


This gives $\rr(\Ur_4) = 4 \neq r(\Ur_4)$. The proof consists in
the examination of a big number of possibilities, that can be made
partially by a computer search. We do not give the proof here.

\section{Combinatorial properties of universal complexes}

In this section we treat different properties of a Buchstaber
number as consequences of some geometrical and combinatorial
properties of universal complexes.

\begin{prop} \label{linkUl}
1) Let $\sigma\in U_l$, $|\sigma| = k$. Then $\link_{U_l}\sigma
\sim U_{l-k}$.

2) There exists a nondegenerate map from $U_l\ast U_k$ to
$U_{l+k}$.

Similar statements hold for the real universal complexes.
\end{prop}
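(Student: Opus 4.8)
The plan is to reduce both statements to the standard fact that if $W\subseteq\zl$ is a direct summand with projection $\pi\colon\zl\to\zl/W$, then for a basis $v_1,\ldots,v_k$ of $W$ the set $\{v_1,\ldots,v_k,w_1,\ldots,w_j\}$ is unimodular in $\zl$ if and only if $\{\pi(w_1),\ldots,\pi(w_j)\}$ is unimodular in $\zl/W$. I would first establish this fact by extending one set to a basis and projecting, and by lifting a basis in the other direction.

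For part~1 write $\sigma=\{v_1,\ldots,v_k\}$. Since $\sigma$ is a simplex of $U_l$ it is unimodular, so $W=\langle v_1,\ldots,v_k\rangle$ is a direct summand and $\zl/W\cong\Zo^{l-k}$. The fact above says that a set disjoint from $\sigma$ is a simplex of $\link_{U_l}\sigma$ precisely when its image under $\pi$ is unimodular in the quotient; in particular every vertex $w$ of the link has primitive nonzero image $\pi(w)$, so $\pi$ restricts to a vertex map $\link_{U_l}\sigma\to U_{l-k}$, and it is nondegenerate because the images of the vertices of any link-simplex are unimodular, hence linearly independent and distinct. For the reverse map I would extend $\sigma$ to a basis $v_1,\ldots,v_k,u_1,\ldots,u_{l-k}$ and set $W'=\langle u_1,\ldots,u_{l-k}\rangle$, so that $\pi$ identifies $W'$ with $\zl/W$. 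The resulting section $s\colon\Zo^{l-k}\cong W'\hookrightarrow\zl$ sends primitive vectors to primitive vectors and, by the same fact, carries unimodular sets of $U_{l-k}$ to simplices of $\link_{U_l}\sigma$, so it is a nondegenerate map $U_{l-k}\to\link_{U_l}\sigma$. Having nondegenerate maps both ways yields $\link_{U_l}\sigma\sim U_{l-k}$.

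Part~2 is more direct. Using the splitting $\Zo^{l+k}=\zl\oplus\Zo^k$ I would map a vertex $v$ of $U_l$ to $(v,0)$ and a vertex $w$ of $U_k$ to $(0,w)$; both images are primitive. For a simplex $\sigma\cup\tau$ of the join $U_l\ast U_k$, the images of $\sigma$ and of $\tau$ lie in complementary summands, so extending $\sigma$ and $\tau$ to bases of $\zl$ and $\Zo^k$ and combining them gives a basis of $\Zo^{l+k}$ containing all the image vectors; hence the image set is unimodular and the map is nondegenerate. For the real versions the same two constructions work verbatim with $\zl$ replaced by $\Zt^l$: one obtains $\link_{\Ur_l}\sigma\sim\Ur_{l-k}$ and a nondegenerate map $\Ur_l\ast\Ur_k\to\Ur_{l+k}$, where unimodularity means linear independence and the summand and quotient decompositions are those of vector spaces over a field, which makes the checks even shorter.

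The step I expect to be the crux is the characterization used in part~1 together with the fact that it yields only an \emph{equivalence}, not an isomorphism: the projection $\pi$ is genuinely many-to-one on vertices (for example $w$ and $w+v_1$ have the same image whenever both lie in the link), so $\link_{U_l}\sigma$ need not be isomorphic to $U_{l-k}$, and one must produce the two nondegenerate maps separately rather than a single simplicial isomorphism.
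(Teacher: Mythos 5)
Your proposal is correct and follows essentially the same route as the paper: one direction of the equivalence in part~1 is the quotient map by the span of $\sigma$ (the paper's map $r:\zl\to\zl/D$), the other is the inclusion of a complementary direct summand (the paper just normalizes $\sigma$ to standard basis vectors first so that this section is the explicit coordinate inclusion), and part~2 uses the same splitting $\Zo^{l+k}=\Zo^l\oplus\Zo^k$. Your closing remark that the projection is many-to-one, so one only gets an equivalence rather than an isomorphism, is a correct and worthwhile observation consistent with the paper's use of the relation $\sim$.
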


\begin{proof}
We provide a proof only in the complex case. The real case can be
proved by the same reasoning.

1) Any automorphism of $\zl$ determines an automorphism of the
simplicial complex $U_l$. Therefore without loss of generality we
can prove the statement only for the simplex $\sigma$ whose
vertices are the first $k$ vectors of the standard basis.

At first, we construct a nondegenerate map $p$ from $U_{l-k}$ to
$\link_{U_l}\sigma$. For $v =
\linebreak(v_1,\ldots,v_{l-k})\in~U_{l-k}$ we set $p(v) =
(0,\ldots,0,v_1,\ldots,v_{l-k})\in U_l$. This map is simplicial
and nondegenerate. Moreover, its image lies in
$\link_{U_l}\sigma$.

Now we construct a nondegenerate map $q:
\link_{U_l}\sigma\rightarrow U_{l-k}$. Let $D$ be a subgroup of a
lattice generated by vertices of $\sigma$. Since $D$ is a direct
summand, there exists a surjective quotient map
$r:\zl\rightarrow\zl/D \cong \mathbb{Z}^{l-k}$. Let us show that
this map induces a nondegenerate map from $\link_{U_l}\sigma$ to
$U_{l-k}$. Let $\tau\in\link_{U_l}\sigma$ or, in other words,
$\tau\sqcup\sigma$ is a part of some basis of a lattice $\zl$.
Suppose $\tau$ is the maximal simplex of $\link_{U_l}\sigma$,
$\tau\sqcup\sigma$ being a basis. In the converse case we complete
$\tau$ to the maximal simplex. Then the vectors
$r(\tau\sqcup\sigma) = r(\tau)\sqcup r(\sigma) = r(\tau)\sqcup
\{0\}$ generate $\mathbb{Z}^{l-k}$. Since
$|r(\tau)|\leqslant|\tau| = l-k$, the set $r(\tau)$ is a basis of
$\mathbb{Z}^{l-k}$ and, therefore, it is a simplex of $U_{l-k}$
with $|r(\tau)| = |\tau|$. This concludes the proof of the first
statement.

2) Consider a decomposition $\mathbb{Z}^{l+k} =
\mathbb{Z}^l\oplus\mathbb{Z}^k$. Let $p: U_l\ast U_k\rightarrow
U_{l+k}$ be the map defined by the rule: $U_l$ maps on the first
summand and $U_k$ maps on the second summand of the decomposition
in the obvious way. It can be proved directly that the map $p$ is
simplicial and nondegenerate.
\end{proof}

This yields an estimation for $r$-numbers of join of complexes
(see \cite{Er} for more algebraic explanation).

\begin{prop}
$$r(K\ast N) \leqslant r(K)+r(N),$$
$$\rr(K\ast N) \leqslant \rr(K)+\rr(N),$$
$$r(K\ast N) \geqslant r(K) + \dim(N) + 1,$$
$$\rr(K\ast N) \geqslant \rr(K) + \dim(N) + 1.$$
\end{prop}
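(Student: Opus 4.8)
The plan is to derive all four inequalities from the two structural facts about universal complexes established in Proposition~\ref{linkUl}, together with the monotonicity Proposition~\ref{estgen}. The upper and lower bounds require different inputs, so I would treat them separately, handling the complex case $r$ and the real case $\rr$ in parallel since the arguments are identical.

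For the upper bounds $r(K\ast N)\leqslant r(K)+r(N)$, the idea is to build an explicit nondegenerate map into a small enough universal complex. By definition there exist nondegenerate simplicial maps $f\colon K\to U_{r(K)}$ and $g\colon N\to U_{r(N)}$. These induce a nondegenerate map of joins $f\ast g\colon K\ast N\to U_{r(K)}\ast U_{r(N)}$, and composing with the nondegenerate map $U_{r(K)}\ast U_{r(N)}\to U_{r(K)+r(N)}$ furnished by part~2) of Proposition~\ref{linkUl} yields a nondegenerate map $K\ast N\to U_{r(K)+r(N)}$. Hence $r(K\ast N)\leqslant r(K)+r(N)$, and the real case is verbatim the same with $\Ur$ in place of $U$. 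The only point to verify is that the join of two nondegenerate simplicial maps is again nondegenerate, which is routine: a simplex of $K\ast N$ is a disjoint union $\sigma\sqcup\tau$ with $\sigma\in K$, $\tau\in N$, and it maps to $f(\sigma)\sqcup g(\tau)$ with cardinalities preserved on each factor.

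For the lower bounds $r(K\ast N)\geqslant r(K)+\dim(N)+1$, I would argue by producing a nondegenerate map \emph{out of} $K\ast N$ that witnesses a link structure. Set $l=r(K\ast N)$, so there is a nondegenerate map $\Phi\colon K\ast N\to U_l$. Pick a maximal simplex $\tau\in N$, so $|\tau|=\dim(N)+1=:k$. The subcomplex $K\ast\tau\subseteq K\ast N$ restricts $\Phi$ to a nondegenerate map whose image sits inside $U_l$; writing $\sigma=\Phi(\tau)$, which is a simplex of $U_l$ with $|\sigma|=k$ by nondegeneracy, the $K$-part of the map lands in $\link_{U_l}\sigma$. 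Composing with the nondegenerate map $\link_{U_l}\sigma\to U_{l-k}$ from part~1) of Proposition~\ref{linkUl} gives a nondegenerate map $K\to U_{l-k}$, whence $r(K)\leqslant l-k=r(K\ast N)-\dim(N)-1$. Rearranging yields the claimed bound, and again the real case is identical.

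The main obstacle will be making precise the claim that the restriction of $\Phi$ to $K$ (viewed inside $K\ast\tau$) factors through $\link_{U_l}\Phi(\tau)$. The content is that for every $\rho\in K$ the set $\rho\sqcup\tau$ is a simplex of $K\ast N$, so $\Phi(\rho\sqcup\tau)=\Phi(\rho)\sqcup\Phi(\tau)$ is a simplex of $U_l$ with $\Phi(\rho)$ and $\Phi(\tau)=\sigma$ meeting only in vertices that nondegeneracy forbids from coinciding; this is exactly the condition $\Phi(\rho)\in\link_{U_l}\sigma$. One must also confirm the composite $K\to\link_{U_l}\sigma\to U_{l-k}$ stays nondegenerate, which follows since each stage is nondegenerate. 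Everything else is a direct assembly of the two parts of Proposition~\ref{linkUl}, so I expect no further difficulty.
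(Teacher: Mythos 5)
Your argument is correct and coincides with the paper's own proof: the upper bounds come from composing $f\ast g$ with the nondegenerate map $U_{r(K)}\ast U_{r(N)}\to U_{r(K)+r(N)}$ of Proposition~\ref{linkUl}(2), and the lower bounds from restricting to the link of a maximal-dimensional simplex of $N$ and invoking $\link_{U_l}\sigma\sim U_{l-k}$ from Proposition~\ref{linkUl}(1). Your write-up is in fact slightly more careful than the paper's in justifying why the image of $K$ lands in $\link_{U_l}\Phi(\tau)$.
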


\begin{proof}
The case of $r$ is considered only.

There exists a nondegenerate map $p:K\rightarrow U_{l_1}$ and a
nondegenerate map $q: N\rightarrow U_{l_2}$ where $l_1 = r(K)$,
$l_2 = r(N)$. Therefore we have a nondegenerate map $p\ast q:
K\ast N\rightarrow U_{l_1}\ast U_{l_2}$. Composing it with a
nondegenerate map from $U_{l_1}\ast U_{l_2}$ to $U_{l_1+l_2}$ we
obtain a nondegenerate map from $K\ast N$ to $U_{l_1+l_2}$. This
completes the proof of the first formula.

We turn to the proof of the third formula. Denote $r(K\ast N)$ by
$l$. Let $\sigma$ be a simplex of $N$ of maximal dimension. Then
the subcomplex $\link_{K\ast N}\sigma = \link_N\sigma\ast K = K$
can be mapped nondegenerately to $\link_{U_l}\Delta$, where
$|\Delta| = \dim(N) + 1$. But $\link_{U_l}\Delta\sim
U_{l-\dim(N)-1}$, therefore, $\link_{K\ast N}\sigma$ can be mapped
nondegenerarely into $U_{l-\dim(N) - 1}$. This implies
$r(K)\leqslant l - \dim(N) - 1 = r(K\ast N) - \dim(N) - 1$.
\end{proof}

We call the complex $K$ \textit{optimal} if $r(K) = \dim(K)+1$. It
now follows from the previous statement that $r(K\ast N) =
r(K)+r(N)$ if one of the complexes $K$ and $N$ is optimal.

Optimal complexes play an important role in toric topology. If we
are given a simple polytope $P$ and the boundary of its dual
$\partial P^\ast$ is an optimal simplicial complex, then there
exists a quasitoric manifold over the polytope $P$. More details
on quasitoric manifolds can be found in \cite{DJ}, \cite{BP}.

\begin{conj} \label{conj}
For any simplicial complexes $K$ and $N$
$$r(K\ast N) = r(K)+r(N)$$
$$\rr(K\ast N) = \rr(K)+\rr(N)$$
\end{conj}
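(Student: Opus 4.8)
The plan is to attack the two reverse inequalities $r(K\ast N)\geqslant r(K)+r(N)$ and $\rr(K\ast N)\geqslant \rr(K)+\rr(N)$, since the matching upper bounds and the weaker lower bounds $r(K)+\dim(N)+1$ are already at hand; I treat the complex case and run the real case in parallel. First I would reformulate the hypothesis: a nondegenerate map $f\colon K\ast N\to U_l$ with $l=r(K\ast N)$ is the same as a characteristic map $\Lambda\colon V(K)\sqcup V(N)\to\zl$ whose restrictions to $V(K)$ and $V(N)$ are themselves characteristic maps of $K$ and $N$, and which satisfies the mixed condition that $\Lambda(\sigma)\sqcup\Lambda(\tau)$ is unimodular for every $\sigma\in K$ and $\tau\in N$. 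The goal is then to extract from this single map two \emph{independent} characteristic maps, one of $K$ and one of $N$, whose target ranks sum to at most $l$.

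The tool I would lean on is the link formula $\link_{U_l}\sigma\sim U_{l-|\sigma|}$ of Proposition \ref{linkUl}, which already powers the existing bound: choosing a maximal simplex $\tau\subseteq N$ one has $\link_{K\ast N}\tau=K$ mapping into $\link_{U_l}f(\tau)\sim U_{l-\dim(N)-1}$. To upgrade the crude count $\dim(N)+1$ to $r(N)$, the natural step is to seek a direct-sum decomposition $\zl=A\oplus B$ with $\rk A=r(K)$ and $\rk B=r(N)$ for which, up to the lattice automorphisms used freely in Proposition \ref{linkUl}, $\Lambda(V(K))$ spans the summand $A$ and $\Lambda(V(N))$ the complementary summand $B$. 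I would organize this as an induction whose base case $N=\pt$ is clean: here $K\ast N=\cone(K)$, and quotienting $\zl$ by the primitive vector through the apex turns $\Lambda|_{V(K)}$ into a characteristic map of $K$ into $\Zo^{l-1}$ by the mixed condition, so $r(\cone K)\geqslant r(K)+1$, matching the upper bound. The inductive step would peel a vertex or facet off $N$ and track the drop in the required target rank.

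The hard part, and the step I expect to be the true obstacle, is the existence of the splitting $\zl=A\oplus B$. The mixed unimodularity condition constrains the images only simplex by simplex; it does \emph{not} force the lattices generated by $\Lambda(V(K))$ and by $\Lambda(V(N))$ to be complementary direct summands. When one factor is optimal the slack vanishes, since then $r(K)+\dim(N)+1$ already equals $r(K)+r(N)$, which explains why equality holds in every example with an optimal factor. For two genuinely non-optimal complexes, however, the image directions of $K$ and of $N$ can be shared economically while still respecting every mixed simplex, so a characteristic map of $K\ast N$ may exist into a lattice of rank strictly below $r(K)+r(N)$. I therefore expect the splitting lemma to fail exactly in the doubly non-optimal regime, and the counterexample announced in the introduction should be a pair $K,N$ of this kind — the join of two non-optimal graphs such as $K_4\ast K_4$ being the lowest-dimensional natural candidate — realizing $r(K\ast N)<r(K)+r(N)$.
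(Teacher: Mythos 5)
You correctly read the situation: the conjecture is false, and the task is to produce a counterexample rather than a proof. Your structural analysis of why a proof must fail --- the mixed unimodularity condition constrains the images only simplex by simplex and does not force the sublattices spanned by $\Lambda(V(K))$ and $\Lambda(V(N))$ to be complementary direct summands --- is sound and matches the intuition behind the paper's construction. But there is a genuine gap at the decisive step: you never exhibit a counterexample, and the one candidate you name, $K_4\ast K_4$, is provably \emph{not} one. Proposition \ref{prop} of the paper establishes that additivity holds for all joins of complete graphs, $r(K_p\ast K_q)=r(K_p)+r(K_q)$ and likewise for $\rr$, even though no $K_p$ with $p\geqslant 2$ not of the form giving an optimal complex is optimal. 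The reason is that for a complete graph every pair of vertices is an edge, so the forbidden set $A=\{x_i\}\cup\{x_\alpha+x_\beta\}_{\alpha\neq\beta}$ is as large as possible; the paper's proof runs through the ``good pair'' transformations of Lemma \ref{constrlem} and the hyperplane-pushing Lemma \ref{hyperlem}, reducing by induction to the case where $p$ and $q$ are powers of two. So ``doubly non-optimal'' is not a sufficient criterion for failure, and your heuristic stops exactly where the real work begins.

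The counterexample the paper actually gives exploits the opposite extreme: it joins $K_4$ with the Gr\"{o}tzsch graph, which is triangle-free yet has chromatic number $4$, so both factors have $r=\rr=3$ by formula (\ref{theorgraph}), while $r(K)+r(N)=6$. The sparsity of the Gr\"{o}tzsch graph's edge set keeps its set of pairwise sums small enough that one can explicitly assign vectors of $\Zt^5$ (all with last coordinate $1$, with edge sums having three or four units) to its vertices, and vectors with last coordinate $0$ and edge sums with at most two units to the vertices of $K_4$, making the two forbidden sets $A$ and $B$ disjoint; the integral lift to $\Zo^5$ then handles the complex case via a parity-of-minors argument. This yields $r(K\ast N)\leqslant 5 < 6$. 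The missing idea in your proposal is therefore not the negative verdict --- you got that right --- but the recognition that the counterexample must be engineered around a graph whose chromatic number is large relative to its clique number, and that complete graphs, your proposed candidates, are exactly the family for which the paper proves the conjecture true.
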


This conjecture fails in general, the counterexample will be given
below. We first show that this formula holds true for complete
graphs even if none of them is optimal.

\begin{prop}\label{prop}
$$\rr(K_p\ast K_q) = \rr(K_p)+ \rr(K_q),$$
$$r(K_p\ast K_q) = r(K_p)+ r(K_q).$$
\end{prop}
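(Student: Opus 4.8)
The plan is to take the upper bound for free from the join estimate already proved and to reduce both equalities to a single lower bound in the real case. Since $K_p$ and $K_q$ are graphs with $\gamma(K_p)=p$ and $\gamma(K_q)=q$, the theorem on graphs gives $r(K_p)=\rr(K_p)=\lceil\log_2(p+1)\rceil$ and $r(K_q)=\rr(K_q)=\lceil\log_2(q+1)\rceil$; write $a$ and $b$ for these two numbers. The join inequality yields $r(K_p\ast K_q)\leqslant a+b$ and $\rr(K_p\ast K_q)\leqslant a+b$, while $\rr\leqslant r$ always. Hence it is enough to prove the single inequality $\rr(K_p\ast K_q)\geqslant a+b$: this squeezes all four quantities to $a+b$ and settles both asserted equalities at once.

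To prove $\rr(K_p\ast K_q)\geqslant a+b$ I would translate a real characteristic map into a condition on subsets of $\Zt^l$. A nondegenerate map from $K_p\ast K_q$ to $\Ur_l$ assigns $p$ distinct nonzero vectors (a set $\mathcal X$) to the vertices of $K_p$ and $q$ distinct nonzero vectors (a set $\mathcal Y$) to those of $K_q$. Put $A=\mathcal X\cup\{0\}$ and $B=\mathcal Y\cup\{0\}$. Unwinding which four-element subsets must be linearly independent over $\Zt$, nondegeneracy becomes exactly the sumset condition $(A+A)\cap(B+B)=\{0\}$, where $A+A$ denotes the set of pairwise sums. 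This at once makes the map $A\times B\to\Zt^l$, $(u,v)\mapsto u+v$, injective: a coincidence $u+v=u'+v'$ puts the nonzero vector $u+u'=v+v'$ into $(A+A)\cap(B+B)$. Hence $(p+1)(q+1)\leqslant 2^l$, which already proves the bound in every case where $\lceil\log_2((p+1)(q+1))\rceil=a+b$.

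For the remaining cases I would pass to the spans $X=\langle\mathcal X\rangle$ and $Y=\langle\mathcal Y\rangle$. As $\mathcal X$ consists of distinct nonzero vectors, $\dim X\geqslant a$, and likewise $\dim Y\geqslant b$; since $X+Y\subseteq\Zt^l$ it suffices to prove $\dim(X\cap Y)\leqslant(\dim X-a)+(\dim Y-b)$, which gives $l\geqslant\dim(X+Y)\geqslant a+b$. The main tool is the saturation lemma: if $2|A|>2^{\dim X}$, equivalently $\dim X=a$, then $A+A=X$, and symmetrically for $B$. When $B+B=Y$ one has $X\cap Y\subseteq B+B$, so $(A+A)\cap(X\cap Y)=\{0\}$ and the projection of $A$ to $X/(X\cap Y)$ stays injective; this forces $\dim(X\cap Y)\leqslant\dim X-a$, which is more than enough. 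By symmetry the argument closes whenever $A$ or $B$ saturates its span.

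The hard part will be the doubly wasteful case, in which neither $A$ nor $B$ fills its span — and a short computation shows this is precisely what must happen if $l$ were to equal $a+b-1$, since otherwise the saturation argument already gives $\dim(X+Y)\geqslant a+b>l$. In this regime a vector of $X\cap Y$ need not be a sum of two elements of either set, so the disjointness $(A+A)\cap(B+B)=\{0\}$ no longer detects the intersection and the clean projection breaks down. I expect this step to require a finer, coding-theoretic analysis of how $\mathcal X$ and $\mathcal Y$ distribute across a splitting complementary to $X\cap Y$, bounding the number of vectors admissible on each side by minimum-distance constraints; the complete-graph hypothesis enters exactly here, since every pair inside $\mathcal X$ and inside $\mathcal Y$ contributes an independence condition.
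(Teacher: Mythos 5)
Your reduction to the single real lower bound is the same first move as the paper's, and your translation of nondegeneracy into the sumset condition $(A+A)\cap(B+B)=\{0\}$ (with $0$ adjoined to each set) is a correct restatement of what the paper calls a good pair; the counting bound $(p+1)(q+1)\leqslant 2^l$ and the saturation argument are both sound as far as they go. But there is a genuine gap exactly where you flag one: the case in which neither set spans a subspace of the minimal possible dimension. That case is not a residual technicality --- it is the whole difficulty. Already for $p=q=4$ and $l=5$ one has $a=b=3$, the count gives only $25\leqslant 32$, and both $\mathcal{X}$ and $\mathcal{Y}$ may consist of four linearly independent vectors spanning $4$-dimensional subspaces of $\Zt^5$; then $A+A$ misses five elements of $\langle\mathcal{X}\rangle$, the disjointness condition no longer detects $X\cap Y$, and none of your three mechanisms applies. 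Since the proposition asserts $\rr(K_4\ast K_4)=6$, this is precisely a configuration that must be excluded, and your closing sentence promises a ``coding-theoretic analysis'' without supplying one. The proposition is therefore not proved.

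The paper closes this case by a different device. It first reduces to $p=2^k$, $q=2^n$ by monotonicity of $\rr$ under nondegenerate maps, then runs an induction on $k+n$ over the ambient dimension $l=k+n+1$. The engine is a small set of explicit moves on good pairs (Lemma \ref{constrlem}, and the composition $t_{\alpha\delta}\circ t_{\beta\gamma}\circ t_{\beta\delta}\circ t_{\alpha\gamma}$, which adds a single fixed $x_\beta$ to two chosen elements of $T$). Lemma \ref{hyperlem} uses these moves to push the even-sized set $T$ entirely into a hyperplane $\Pi\cong\Zt^{k+n}$, and one further move forces at least $2^{k-1}$ elements of $S$ into $\Pi$ as well, yielding a good pair for $K_{2^{k-1}}\ast K_{2^n}$ in one dimension lower and contradicting the induction hypothesis. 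To complete your argument you would need either to reinvent something playing the role of these transformations or to make the extremal analysis you allude to precise; as written, the hard case remains open.
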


Let us show that the first formula implies the second one. Indeed,
suppose that $r(K_p\ast K_q) < r(K_p)+ r(K_q)$. Then
$$\rr(K_p\ast K_q)\leqslant r(K_p\ast K_q) < r(K_p)+ r(K_q) = \rr(K_p)+ \rr(K_q).$$
So we need to prove only the real case.

Consider a nondegenerate map $f\colon K\ast N\to \Ur_l$. The
images of vertices of a complex $K$ are denoted by $x_i,
i=1,\ldots,m_K$ and the images of vertices of $N$ are denoted by
$y_j, j=1,\ldots,m_N$. All these images are considered as nonzero
vectors of $\Zt^l$. A condition of nondegeneracy now implies that
$\{x_i\}_{i\in \sigma}\sqcup \{y_j\}_{j\in \tau}$ is a linearly
independent set of vectors for $\sigma\in K$ and $\tau \in N$.
Equivalently: $\{x_i\}_{i\in\sigma}$ are linearly independent,
$\{y_j\}_{j\in\tau}$ are linearly independent and sets of sums $A
= \{\sum\limits_{i\in\sigma}x_i$ for $\sigma\in K,
\sigma\neq\varnothing\}$ and $B = \{\sum\limits_{j\in\tau}y_j$ for
$\tau\in N, \tau\neq\varnothing\}$ do not intersect. In more
conceptual terms: a nondegenerate map from $K$ to $U_l$ defines an
arrangement of subspaces, each subspace being a span of
$\{x_i\}_{i\in \sigma}$ for simplices $\sigma\in K$. Then a
nondegenerate map from $K\ast N$ to $\Ur_l$ is defined iff both
nondegenerate maps from $K$ and $N$ to $\Ur_l$ are defined and
corresponding arrangements intersect each other only in zero.

For the particular case of complete graphs $K = K_p, N = K_q$ the
described condition of nondegeneracy has the form: all $x_i$ are
different for $i=1,\ldots, p$, all $y_j$ are different for
$j=1,\ldots, q$, and sets $A =
\{x_i\}\cup\{x_\alpha+x_\beta\}_{\alpha\neq\beta}$ and $B =
\{y_j\}\cup\{x_\gamma+x_\delta\}_{\gamma\neq\delta}$ are disjoint.
In this case we call the sets $\{x_i\}_{i=1,\ldots,p}$ and
$\{y_j\}_{j=1,\ldots,q}$ a good pair. Next lemma shows how new
good pairs can be constructed from the given one.

\begin{lemma} \label{constrlem}
If $\{x_1,\ldots,x_\alpha,\ldots,x_p\}$ and
$\{y_1,\ldots,y_\gamma,\ldots,y_q\}$ is a good pair then
$\{x_1+y_\gamma,\ldots,
x_{\alpha-1}+y_\gamma,x_\alpha,x_{\alpha+1}+y_\gamma,\ldots,
x_p+y_\gamma\}$ and $\{y_1+x_\alpha,\ldots,y_{\gamma-1}+x_\alpha,
y_\gamma,y_{\gamma+1}+x_\alpha,\ldots,y_q+x_\alpha\}$ is also a
good pair. The pair
$\{x_1+x_\alpha,\ldots,x_\alpha,\ldots,x_p+x_\alpha\}$ and
$\{y_1,\ldots,y_\gamma,\ldots,y_q\}$ is good.
\end{lemma}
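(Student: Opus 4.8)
The plan is to verify, for each of the two constructions, the three defining conditions of a good pair directly: that the new $x$-vectors are distinct and nonzero, that the new $y$-vectors are distinct and nonzero, and that the associated sum-sets $A'$ and $B'$ are disjoint. Everything should be reduced to the data packaged in the hypothesis that the original pair is good, namely the \emph{basic relations}: the $x_i$ are distinct and nonzero, the $y_j$ are distinct and nonzero, and $A=\{x_i\}\cup\{x_a+x_b\}$ is disjoint from $B=\{y_j\}\cup\{y_c+y_d\}$ (concretely: no $x_i$ equals a $y_j$, no $x_i$ equals a $y_c+y_d$, no $x_a+x_b$ equals a $y_j$, no $x_a+x_b$ equals a $y_c+y_d$).

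For the last assertion (shearing only the $x$-side by $x_\alpha$, leaving the $y$-side fixed) I expect a clean argument. Writing $\tilde x_i = x_i+x_\alpha$ for $i\neq\alpha$ and $\tilde x_\alpha=x_\alpha$, a short computation over $\Zt$ shows that the new sum-set coincides setwise with the old one: a shifted vertex $x_i+x_\alpha$ is an old pairwise sum, a shifted sum $\tilde x_a+\tilde x_b=x_a+x_b$ (for $a,b\neq\alpha$) is an old pairwise sum, and a shifted edge through $\alpha$ reproduces an old vertex $x_a$. Hence $\tilde A=A$, and since the $y$-side is untouched, $\tilde A\cap B=A\cap B=\varnothing$ is immediate; distinctness and nonvanishing of the $\tilde x_i$ follow in one line. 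So this assertion reduces entirely to the invariance $\tilde A=A$.

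For the first assertion the key idea is the same sum-set bookkeeping, but now both sides move: $x_i'=x_i+y_\gamma$ ($i\neq\alpha$), $x_\alpha'=x_\alpha$, and $y_j'=y_j+x_\alpha$ ($j\neq\gamma$), $y_\gamma'=y_\gamma$. I would split $A'$ and $B'$ into an unshifted part and a shifted part: for $A'$ the unshifted part is $\{x_\alpha\}\cup\{x_a+x_b : a,b\neq\alpha\}$ and the shifted part is $\{x_i+y_\gamma\}\cup\{x_a+x_\alpha+y_\gamma\}$, and symmetrically for $B'$. The disjointness $A'\cap B'=\varnothing$ then breaks into four sub-intersections. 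Two of them — unshifted against unshifted, and shifted against shifted — reduce to the basic relation $A\cap B=\varnothing$, the latter after cancelling the common shift over $\Zt$ (the shear by $x_\alpha$ fixes the relevant subset of $A$, and likewise for $y_\gamma$ in $B$), so these cause no trouble.

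The main obstacle is the remaining two \emph{mixed} sub-intersections, where an unshifted element of one side is compared with a shifted element of the other. Expanding and cancelling over $\Zt$ yields equalities that no longer involve only single vectors and pairwise sums, but three-fold combinations from a single factor — for instance $x_i=y_c+y_d+y_\gamma$, or $x_a+x_b+x_\alpha=y_j$, or $x_a+x_b+x_\alpha=y_c+y_\gamma$. Such relations correspond to triples of vertices inside one factor, which are not faces of $K_p$ or $K_q$ (these are only graphs) and so are not obviously excluded by the good-pair hypothesis as stated. This is exactly the delicate point, and I expect the heart of the proof to lie here: I would scrutinize carefully whether the hypothesis actually forbids these three-term cross relations, or whether the construction needs to carry additional nondegeneracy information. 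Once these mixed cases are controlled, distinctness and nonvanishing of the $x_i'$ and the $y_j'$ again follow directly from the basic relations, completing the verification.
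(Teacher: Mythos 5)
The paper offers no argument for this lemma beyond the sentence ``The proof is straightforward,'' so there is no proof to compare against; the only question is whether your verification goes through. Your treatment of the second assertion (the shear $t^1_\alpha$ of the first set by its own element) is complete and correct: the sum-set $A$ is literally unchanged, $B$ is untouched, and distinctness and nonvanishing follow at once. Your four-way decomposition of $A'\cap B'$ for the first assertion is also the right bookkeeping, and the two cases you dispose of (unshifted against unshifted, shifted against shifted) do reduce to $A\cap B=\varnothing$ exactly as you claim.

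The obstacle you isolate in the mixed cases is not a defect of your write-up but a genuine failure of the statement itself: the three-term relations such as $x_a+x_b+x_\alpha=y_j$ are \emph{not} excluded by the good-pair hypothesis, and when one of them holds the transformed pair fails to be good. Concretely, in $\Zt^4$ take $p=3$, $q=2$, $x_1=e_1$, $x_2=e_2$, $x_3=e_3$, $y_1=e_1+e_2+e_3$, $y_2=e_4$. Then $A=\{e_1,e_2,e_3,e_1+e_2,e_1+e_3,e_2+e_3\}$ and $B=\{e_1+e_2+e_3,\;e_4,\;e_1+e_2+e_3+e_4\}$ are disjoint, so this is a good pair. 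Applying $t_{\alpha\gamma}$ with $\alpha=3$, $\gamma=2$ produces the sets $\{e_1+e_4,\;e_2+e_4,\;e_3\}$ and $\{e_1+e_2,\;e_4\}$, and now $(x_1+y_2)+(x_2+y_2)=e_1+e_2=y_1+x_3$: the four vectors assigned to the maximal simplex $\{1,2\}\sqcup\{1,2\}$ of $K_3\ast K_2$ are linearly dependent, so the new pair is not good. The culprit is precisely your flagged relation $x_1+x_2+x_3=y_1$. Hence the first assertion of the lemma is false as stated; it would need additional hypotheses excluding the relations $x_a+x_b+x_\alpha=y_j$, $x_a+x_b+x_\alpha=y_c+y_\gamma$, $x_i=y_c+y_d+y_\gamma$ and $x_a+x_\alpha=y_c+y_d+y_\gamma$, and since the subsequent composition lemma, Lemma \ref{hyperlem} and Proposition \ref{prop} all rest on repeated applications of $t_{\alpha\gamma}$, that part of the paper needs to be re-examined as well. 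Your instinct to scrutinize whether the hypothesis forbids the three-term cross relations was exactly right: it does not.
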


The proof is straightforward. Eventually such transformations work
only for complete graphs. Note that first transformation changes
both sets of a pair and second transformation changes only one of
the sets. We denote first transformation by $t_{\alpha\gamma}$ and
second transformation --- by $t^1_\alpha$ (where the index $1$
denotes that it is applied to the first set). Both
$t_{\alpha\gamma}$ and $t^1_\alpha$ are idempotents. Next lemma
shows that a certain composition of such elementary
transformations looks quite simple (if we look only on one set).

\begin{lemma}
Applying the transformation $t_{\alpha\delta}\circ
t_{\beta\gamma}\circ t_{\beta\delta}\circ t_{\alpha\gamma}$ to a
good pair $(\{x_i\}_{i=1,\ldots,p},\linebreak
\{y_j\}_{j=1,\ldots,q})$ for $\alpha\neq\beta, \gamma\neq\delta$
gives a new good pair, whose second set is
$\{y_1,\ldots,y_\gamma+x_\beta,\ldots,y_\delta+x_\beta,\ldots,y_q\}$.
This set differs from the original set in two entries by an
element $x_\beta$.
\end{lemma}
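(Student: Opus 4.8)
The plan is to prove this by a direct computation, applying the four elementary transformations to the good pair one after another and recording how each entry of both sets evolves. The only input needed is the explicit action of a single $t_{\alpha\gamma}$ supplied by Lemma~\ref{constrlem}: it adds the \emph{current} $\gamma$-th vector of the second set to every vector of the first set except the $\alpha$-th, and simultaneously adds the current $\alpha$-th vector of the first set to every vector of the second set except the $\gamma$-th. Since we work over $\Zt$, where $2v=0$, the whole argument reduces to bookkeeping in which repeated additions of the same vector cancel in pairs.

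Concretely, I would first apply $t_{\alpha\gamma}$ (the rightmost factor, applied first) and write out the resulting pair, noting that the $\alpha$-th entry of the first set and the $\gamma$-th entry of the second set are the pivots and remain fixed. Then I would apply $t_{\beta\delta}$ to this new pair, being careful that its pivots are now the \emph{current} $\beta$-th and $\delta$-th entries, which have already been altered at the first step; I would continue with $t_{\beta\gamma}$ and finally $t_{\alpha\delta}$ in the same fashion, substituting at each stage the updated value of the pivot rather than its original value.

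A convenient way to cut down the bookkeeping is to notice that the four distinguished symbols $x_\alpha, x_\beta$ (first set) and $y_\gamma, y_\delta$ (second set) form a closed subsystem: every remaining vector of either set is never used as a pivot, so it only ever receives additions of one of these four vectors and never feeds back. Hence the composite is controlled by the induced $\Zt$-linear action on the four-dimensional space spanned by $x_\alpha, x_\beta, y_\gamma, y_\delta$, and this action is just a product of four explicit elementary $4\times4$ matrices over $\Zt$. Once that product is evaluated, the effect on every other entry is read off at once, and the goal is to check that the additions to each untouched entry of the second set cancel while the $\gamma$-th and $\delta$-th entries each acquire precisely $x_\beta$.

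The hard part, and essentially the only place where genuine care is required, is the tracking of the pivots through the four stages. Because $x_\alpha$ serves as a pivot only for $t_{\alpha\gamma}$ and $t_{\alpha\delta}$ while being modified by $t_{\beta\gamma}$ and $t_{\beta\delta}$ (and symmetrically for $x_\beta$, $y_\gamma$, $y_\delta$), substituting the original pivot values instead of the current ones yields a different, non-canceling answer. I would therefore keep an explicit running record of the current value of each of $x_\alpha, x_\beta, y_\gamma, y_\delta$ after every step and carry out the final $\Zt$-cancellations using those updated expressions, which is what ultimately collapses the composite to the stated two-entry modification by $x_\beta$.
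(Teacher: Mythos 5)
Your overall strategy is the same as the paper's (the paper's entire proof is the sentence ``the proof consists in consecutive performing of transformations''), but your text never actually performs the computation: it describes how one would organize the bookkeeping and then \emph{asserts} that the additions cancel down to the stated two-entry modification by $x_\beta$. Since the whole content of the lemma is that verification, this is a genuine gap, not a stylistic one. Worse, when the computation is carried out under exactly the convention you specify (rightmost factor applied first, each $t$ adding the \emph{current} values of its pivots as in Lemma~\ref{constrlem}), the composite does not produce the claimed second set. Take $p=q=2$, $S=\{x_1,x_2\}=\{e_1,e_2\}$, $T=\{y_1,y_2\}=\{e_3,e_4\}$ in $\Zt^4$, $\alpha=\gamma=1$, $\beta=\delta=2$: the four steps send $T$ successively to $(e_3,\,e_4+e_1)$, $(e_2,\,e_4+e_1)$, $(e_2,\,e_1+e_2+e_3+e_4)$, $(e_4,\,e_1+e_2+e_3+e_4)$, whereas the lemma asserts $(y_1+x_2,\,y_2+x_2)=(e_2+e_3,\,e_2+e_4)$. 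For $q>2$ the situation is no better: a generic $y_j$ receives $x_\alpha+(x_\beta+y_\gamma)+(x_\beta+y_\gamma)+(y_\delta+x_\beta)=x_\alpha+x_\beta+y_\delta\neq 0$, so the ``untouched'' entries are in fact touched. Under the opposite convention (always adding the \emph{original} pivots) the generic entries do cancel, but $y_\gamma$ and $y_\delta$ each acquire $x_\alpha+x_\beta$ rather than $x_\beta$. So neither of the two readings you contemplate yields the stated conclusion, and your final sentence claims a cancellation that does not happen.

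This means you cannot discharge the lemma by gesturing at the bookkeeping; you must either exhibit a composition convention under which the stated four-fold product really gives $y_\gamma\mapsto y_\gamma+x_\beta$, $y_\delta\mapsto y_\delta+x_\beta$ with all other $y_j$ fixed, or replace the composite. Note that the shorter composite $t_{\beta\delta}\circ t_{\beta\gamma}$ \emph{does} have exactly the desired effect on the second set: the first factor adds the pivot $x_\beta$ to every $y_j$ with $j\neq\gamma$, the pivot $x_\beta$ is left unchanged by that step, and the second factor adds it again to every $y_j$ with $j\neq\delta$, so the contributions cancel on all entries except $y_\gamma$ and $y_\delta$, each of which receives $x_\beta$ exactly once; goodness is preserved because each factor preserves it by Lemma~\ref{constrlem}. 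Since Lemma~\ref{hyperlem} only uses the effect on the second set, this suffices for the application, but as a proof of the lemma \emph{as stated} (with the four-fold composite) your argument does not go through until the discrepancy above is resolved.
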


The proof consists in consecutive performing of transformations.
The aim of this lemma is following: given a good pair $(S,T)$ one
can construct another good pair $(S',T')$ whose second set $T'$ is
obtained from $T$ by adding the same vector from $S$ to two
different elements of $T$.

\begin{lemma}\label{hyperlem}
Let $q,p>1$ and $q$ be an even number. If there exists a good pair
$S=\{x_i\}_{i=1,\ldots,p}$ and $T=\{y_j\}_{j=1,\ldots,q}$ in
$\Zt^l$ then there exists a good pair
$\{\tilde{x}_i\}_{i=1,\ldots,p}$ and
$\{\tilde{y}_j\}_{j=1,\ldots,q}$ in $\Zt^l$ in which one of the
sets lies in a hyperplane.
\end{lemma}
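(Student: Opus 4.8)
The plan is to fix once and for all a suitable linear functional $\phi\colon\Zt^l\to\Zt$ and then use the composite move supplied by the previous lemma to drive every $y_j$ into the hyperplane $\ker\phi$, flipping the value of $\phi$ on the $y$'s two at a time. The key observation is that this move is conservative on the $y$-sum: it replaces $T$ by the set in which a single vector $x_\beta\in S$ has been added to exactly two elements $y_\gamma,y_\delta$, and over $\Zt$ this alters $\sigma:=\sum_{j=1}^{q}y_j$ by $2x_\beta=0$. Hence $\sigma$ is an invariant of every such move. For any $\phi$ the count $n_1:=|\{\,j:\phi(y_j)=1\,\}|$ satisfies $n_1\equiv\phi(\sigma)\pmod 2$; since $q$ is even, the two hyperplanes $\{\phi=0\}$ and $\{\phi=1\}$ are on an equal footing and the obstruction to collecting all $y_j$ on one of them is precisely the parity recorded by $\phi(\sigma)$, so I will arrange $\phi(\sigma)=0$ and aim for $\ker\phi$.

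Next I choose $\phi$. I claim there is a functional with $\phi(\sigma)=0$ and $\phi(x_\beta)=1$ for some index $\beta$. If no such $\phi$ existed, then every $x_i$ would be killed by every functional vanishing on $\sigma$; by double annihilation each $x_i$ would then lie in the $\Zt$-span of $\sigma$, i.e.\ $x_i\in\{0,\sigma\}$. Because the $x_i$ are nonzero and pairwise distinct and $p>1$, this is impossible: it would force two distinct vectors to equal $\sigma$ (when $\sigma\neq0$), or force some $x_i=0$ (when $\sigma=0$). This is the only place the hypothesis $p>1$ enters, and it is what gives us a flipping vector inside the prescribed hyperplane of functionals.

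Finally I run the reduction. Fix $\phi$ and $x_\beta$ as above. Since $\phi(\sigma)=0$, the number $n_1$ of indices with $\phi(y_j)=1$ is even, so I may pair these indices; for each pair $\{\gamma,\delta\}$ I apply the composite move adding $x_\beta$ to $y_\gamma$ and $y_\delta$, which flips $\phi$ on both and lowers $n_1$ by $2$, while the preceding lemma guarantees the pair remains good. After $n_1/2$ moves every $\tilde y_j$ satisfies $\phi(\tilde y_j)=0$, so the second set lies in the hyperplane $\ker\phi$, as required. The step I expect to be the real obstacle is the \emph{persistence} of a usable flipping vector: one must ensure that a first-set vector on which $\phi$ is odd remains available at every stage. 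If, as its formulation indicates, the move of the previous lemma leaves the first set $S$ unchanged, then the single fixed $x_\beta$ serves all the flips and the difficulty evaporates; otherwise one must re-select such a vector after each move and strengthen the span argument above to show that one survives as long as $n_1>0$.
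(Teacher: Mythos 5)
Your overall strategy --- fix a hyperplane and use the composite move of the preceding lemma to push the $y_j$ into it two at a time --- is the same as the paper's, and your parity bookkeeping is actually slicker: by choosing $\phi$ with $\phi(\sigma)=0$ you guarantee from the outset that the number of ``bad'' $y_j$ is even, whereas the paper fixes the hyperplane $v_1=0$ arbitrarily and must first apply the one-sided move $t^2_\gamma$ to repair the parity --- which is the only place it uses the hypothesis that $q$ is even. Your annihilator argument for the existence of $\phi$ with $\phi(\sigma)=0$ and $\phi(x_\beta)=1$ is correct; it uses $p>1$ together with the distinctness and nonvanishing of the $x_i$, all of which are part of the definition of a good pair.

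The genuine gap is exactly the point you flag at the end, and neither of your proposed ways of closing it works. The preceding lemma does not assert that the first set is unchanged: it describes only the second set of the new good pair, and the surrounding text explicitly notes that the elementary moves $t_{\alpha\gamma}$ change both sets of the pair, so your preferred branch rests on a misreading. Your fallback --- to ``show that a flipping vector survives as long as $n_1>0$'' --- is not provable: nothing prevents the transformed first set from falling entirely into $\ker\phi$. The correct resolution, and the way the paper closes its induction, is to exploit the disjunctive form of the conclusion: at each stage either the \emph{current} first set contains a vector $x_\beta$ with $\phi(x_\beta)=1$, in which case you perform a flip and reduce $n_1$ by two, or it does not, in which case the current first set already lies in the hyperplane $\ker\phi$ and the lemma is proved with the roles of the two sets exchanged. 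With that one-sentence dichotomy inserted your argument is complete (and, as a bonus, never actually needs $q$ to be even).
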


\begin{proof}
Fix an arbitrary hyperplane. We take for instance a hyperplane
$\Pi$ given by $v_1=0$. We now try to transform a given pair to
"push" the set $T$ into the hyperplane $\Pi$. Suppose there are
$k$ elements of $T$ which do not lie in $\Pi$ (thus having $1$ as
their first coordinate). Without loss of generality we assume $k$
is even. Indeed, let $k$ be an odd number. Then take one of the
vectors which do not lie in $\Pi$, say $y_\gamma$ and apply a
transformation $t_\gamma^2$ to $T$. From now on vectors outside
the hyperplane are: a vector $y_\gamma$ and those vectors which
lied in $\Pi$ before the transformation. So there is an even
number of vectors outside $\Pi$ after transformation.

Now we use an induction on an even number $k$. If $k=0$ then lemma
is proved. Suppose $k>0$. Take any vector $x_\beta$ from $S$ which
do not lie in $\Pi$. If there is no such vector, the lemma is
proved since $S$ lies in $\Pi$. Take any other vector $x_\alpha$
from the set $S$ (it exists since $p>1$). Finally, take two
vectors $y_\gamma$ and $y_\delta$ from $T$ which do not lie in
$\Pi$ and apply a transformation $t_{\alpha\delta}\circ
t_{\beta\gamma}\circ t_{\beta\delta}\circ t_{\alpha\gamma}$. This
adds $x_\beta$ to $y_\gamma$ and $y_\delta$ pushing them into the
hyperplane $\Pi$ and do not change other elements of $T$. A number
of elements of $T$ outside $\Pi$ has now being reduced, so we may
apply an induction hypothesis.
\end{proof}

\begin{proof}[Proof of the proposition \ref{prop}]
We are given two complete graphs $K_p$ and $K_q$. Suppose $k$ and
$n$ are maximal integers such that $2^k\leqslant p$ and
$2^n\leqslant q$. Then $\rr(K_p) = \rr(K_{2^k}) = k+1$ and
$\rr(K_q)=\rr(K_{2^n})=n+1$. Suppose that $\rr(K_p\ast
K_q)<\rr(K_p)+\rr(K_q)$. Then $$\rr(K_{2^k}\ast K_{2^n})\leqslant
\rr(K_p\ast K_q)<\rr(K_p)+\rr(K_q) = \rr(K_{2^k})+\rr(K_{2^n}).$$
So the only case under consideration is the case when both $p$ and
$q$ are powers of $2$. From now on $p=2^k$, $q=2^n$.

We claim that there is no nondegenerate map from $K_{2^k}\ast
K_{2^n}$ to $\Ur_{k+n+1}$. We prove this statement by induction on
$k+n$. Suppose there exists a nondegenerate map from $K_p\ast K_q$
to $\Ur_{k+n+1}$. This means that there is a good pair $(S,T)$
with $|S|=p, |T|=q$ in $\Zt^{k+n+1}$. There are two possibilities:

1) One of the numbers $p, q$ is $1$. Then one of the complexes
under consideration is optimal and the proposition holds true.

2) None of the numbers $p,q$ is $1$. Then by lemma \ref{hyperlem}
we can assume that $B$ lies in some hyperplane $\Pi\cong
\Zt^{k+n}$. Let $S_{\Pi} = S\cap \Pi$ and $s_{\Pi}$ be a
cardinality of $S_{\Pi}$. There are two cases:

1') $s_{\Pi}\geqslant 2^{k-1}$. Then a pair $(S_{\Pi},T)$ is a
good pair in $\Pi\cong\Zt^{k+n}$. In this case there exists a
nondegenerate map from $K_{2^{k-1}}\ast K_{2^n}$ to $\Ur_{k+n}$.
This contradicts the induction hypothesis.

2') $s_{\Pi} < 2^{k-1}$. Then there are at least $2^{k-1}+1$
elements of $S$ that do not lie in $\Pi$. Take one of them, say
$x_\alpha$ and apply a transformation $t_\alpha^1$ to the first
set. This transformation puts into $\Pi$ all vectors that were
outside $\Pi$ except $x_\alpha$. After the transformation there
are at least $2^{k-1}$ vectors of $S$ that lie in $\Pi$. Note that
the transformation did not change the set $T$. Now we are in the
conditions of the first case.

The last thing to be checked is the base of induction $k+n=0$.
Obviously there is no nondegenerate map from $\pt\ast\pt =
\Delta_1$ to $\Ur_1=\pt$.
\end{proof}

Now we construct a counterexample to the conjecture. Consider two
graphs $K$ and $N$ depicted in figures \ref{pic1} and \ref{pic2}.
One of them is a complete graph with $4$ vertices. Another one is
Gr\"{o}tzsch graph. The chromatic number of both graphs is $4$.
Therefore by (\ref{theorgraph}):
$$r(K)=r(N)=\rr(K)=\rr(N)=3.$$
We now show that $\rr(K\ast N) \leqslant 5$. To do this a
nondegenerate map from $K\ast N$ to $\Ur_5$ should be constructed.
So we need to assign a nonzero vector $x_i$ of $\Zt^5$ to each
vertex $i$ of $K$ and a vector $y_j$ to each vertex $j$ of $N$. As
was mentioned above the nondegeneracy condition for the map from
the join $K\ast N$ to $\Ur_l$ is equivalent to the following one:

\begin{figure}
\begin{center}
\includegraphics[scale=0.7]{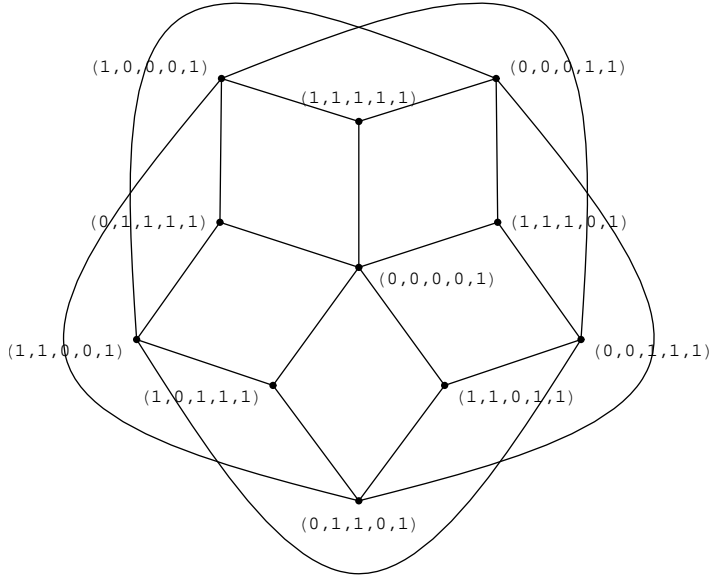}
\end{center}
\caption{Gr\"{o}tzsch graph $K$}\label{pic1}
\end{figure}

\begin{figure}
\begin{center}
\includegraphics[scale=0.7]{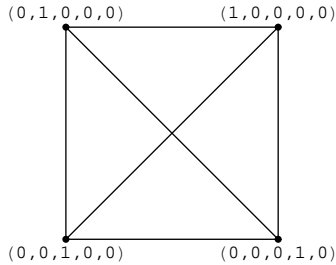}
\end{center}
\caption{Complete graph $N$}\label{pic2}
\end{figure}

\textit{Any adjoint vertices of $K$ should be assigned different
vectors. Any adjoint vertices of $N$ should be assigned different
vectors. Let $A = \{x_i\}\cup\{x_{i_1}+x_{i_2}$ for all edges
$(i_1,i_2)$ of $K\}$ and $B = \{y_j\}\cup\{y_{j_1}+y_{j_2}$ for
all edges $(j_1,j_2)$ of $N\}$. Then $A$ and $B$ should not
intersect.}

It is shown on figures \ref{pic1} and \ref{pic2} how to assign
vectors to vertices of graphs to satisfy the condition. A little
explanation is needed. All the vectors assigned to the vertices of
the first graph have $1$ as their last entry. Their pairwise sums
assigned to each edge of the first graph have three or four units
in their binary expansion as one can see from the figure. But all
the vectors assigned to the vertices of the second graph and their
pairwise sums have zero at the last position and have at most two
units in their binary expansion. Thus vectors which correspond to
vertices and edges of the first graph are disjoint from vectors
corresponding to vertices and edges of the second graph. It
constitutes the condition of nondegeneracy.

Now let us demonstrate that $r(K\ast N)\leqslant 5$. By the
definition of $r$-number we need to construct a nondegenerate map
from $K\ast N$ to $U_{5}$. To do this we assign a vector from
$\mathbb{Z}^5$ to each vertex of $K$ and $N$ in the way shown in
figures \ref{pic1} and \ref{pic2} (but in this case we consider
these vectors as integral vectors). The only thing to check is the
nondegeneracy condition: for any simplex $\sigma\in K\ast N$ the
vectors corresponding to the vertices of $\sigma$ should form
unimodular set. Suppose $\sigma$ is the maximal simplex of $K\ast
N$ and $\sigma = \tau\sqcup\rho$, where $\tau = \{v_1,v_2\}\in K$,
$\rho = \{u_1,u_2\}\in N$. Let $k(v_1), k(v_2), k(u_1), k(u_2)$ be
$5$-dimensional integral vectors corresponding to vertices
$v_1,v_2,u_1,u_2$. Consider an integral $4\times 5$-matrix $A$
whose rows are $k(v_1), k(v_2), k(u_1), k(u_2)$. To prove that
$\{k(v_1), k(v_2), k(u_1), k(u_2)\}$ is a unimodular set we will
show that the matrix $A$ has a minor equal to $\pm1$. The
reduction of $A$ modulo $2$ has a minor $M$ which is nonzero over
$\Zt$ since $\{k(v_1), k(v_2), k(u_1), k(u_2)\}\mod2$ are linearly
independent over $\Zt$ by the previous discussion. Thus $M$ is
odd. The matrix which corresponds to the minor $M$ have at least
two rows with one unit therefore $M$ equals the determinant of
some $2\times 2$-matrix made of zeros and units. It now follows
that $M=\pm1$ by the reasoning that was used in the proof of lemma
\ref{equiv2}.

These constructions confirm that both $\rr$ and $r$ are not
additive in general.

\section{Chromatic-like invariants}

In this section we continue investigating Buchstaber number using
increasing sequences of spaces. We introduce new type of
invariants.

\begin{defin}
Let $K$ be a simplicial complex. The coloring of its vertices $c:
V(K)\rightarrow [k]$ is called $q$-regular if the vertices of any
$q$-simplex are not labeled by the same color. Define
$\gamma_q(K)$ as the minimal number of paints needed for the
$q$-regular coloring of $K$.
\end{defin}

\textit{Remark.} If one consider a hypergraph whose vertices are
vertices of $K$ and hyperedges are $q$-simplices of $K$ then
classical definition of coloring for this hypergraph is the same
as $q$-regular coloring introduced above.

\textit{Example.} $\gamma_1$ is just a chromatic invariant
$\gamma$.

An invariant $\gamma_q$ can be defined via increasing sequences of
simplicial complexes. For any fixed $q$ consider the sequence
$L^{q}_i = \left(\Delta_\infty^{(q-1)}\right)^{\ast i}$ (an
exponent suggests the join of $i$ copies). Then $L^{q}_i$ defines
the invariant $\gamma_q$. Indeed, for $q$-regular coloring of $K$
by $i$ paints we can associate a map from $K$ to
$\left(\Delta_\infty^{(q-1)}\right)^{\ast i}$ which transfers
elements colored by $j$-th paint into different points of $j$-th
factor of $\left(\Delta_\infty^{(q-1)}\right)^{\ast i}$ for
$j=1,\ldots,i$. This map is simplicial and nondegenerate. On the
other hand any such map produce a $q$-regular coloring.

Note that the complex $\left(\Delta_\infty^{0}\right)^{\ast i}$ is
equivalent to $\Delta_{i-1}$, which gives a sequence of simplices
for chromatic number $\gamma = \gamma_1$.

As a consequence of proposition \ref{estgen} we may formulate

\begin{prop}
For any simplicial complex $K$ there holds
$$\gamma_q(K)\leqslant\gamma_q(U_{r(K)}),$$
$$\gamma_q(K)\leqslant\gamma_q(\Ur_{\rr(K)})$$
\end{prop}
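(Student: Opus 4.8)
The plan is to recognize this statement as a direct application of proposition \ref{estgen}. First I would check that $\gamma_q$ belongs to the class of invariants covered by that proposition, namely the invariants defined by increasing sequences. This has in fact just been verified in the preceding discussion: the sequence $L^q_i = \left(\Delta_\infty^{(q-1)}\right)^{\ast i}$ is increasing and defines precisely $\gamma_q$, since a nondegenerate map from $K$ to $L^q_i$ is the same datum as a $q$-regular coloring of $K$ by $i$ colors. Thus $\gamma_q$ qualifies as an invariant $\mathcal{L}$ to which proposition \ref{estgen} may be applied.

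Next I would invoke the defining property of the Buchstaber-type numbers. By the very definition of $r(K)$ as the minimal $l$ for which a nondegenerate simplicial map $K \to U_l$ exists, there is a nondegenerate map from $K$ to $U_{r(K)}$. In the real case, the analogous definition of $\rr(K)$ furnishes a nondegenerate map from $K$ to $\Ur_{\rr(K)}$. These are exactly the hypotheses of proposition \ref{estgen} with $N = U_{r(K)}$ and $N = \Ur_{\rr(K)}$ respectively.

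Finally, applying proposition \ref{estgen} with $\mathcal{L} = \gamma_q$ to each of these nondegenerate maps yields the two asserted inequalities $\gamma_q(K) \leqslant \gamma_q(U_{r(K)})$ and $\gamma_q(K) \leqslant \gamma_q(\Ur_{\rr(K)})$ at once. I do not expect any genuine obstacle here: the entire content is carried by the general monotonicity principle of proposition \ref{estgen} together with the two observations that $\gamma_q$ is a sequence-defined invariant and that nondegenerate maps into $U_{r(K)}$ and $\Ur_{\rr(K)}$ exist by definition of the numbers $r$ and $\rr$. The statement is therefore a corollary whose proof amounts to assembling facts already established in the excerpt.
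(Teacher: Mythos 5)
Your proposal is correct and is exactly the paper's argument: the paper states this proposition as an immediate consequence of proposition \ref{estgen}, having just verified that $\gamma_q$ is defined by the increasing sequence $L^q_i = \left(\Delta_\infty^{(q-1)}\right)^{\ast i}$, and uses the nondegenerate maps $K\to U_{r(K)}$ and $K\to \Ur_{\rr(K)}$ guaranteed by the definitions of $r$ and $\rr$. No gaps.
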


As we will see in some cases these estimations are stronger than
\ref{logchrom}. The only thing to do is to find (or at least
estimate) numbers $\gamma_q(U_l)$ and $\gamma_q(\Ur_l)$. The
interesting result appears in real case.

\begin{theorem} \label{genchrombuch}
For any $l,q\geqslant1$ there holds
$$\gamma_q(\Ur_l)\geqslant \frac{2^l-1}{2^q-1},$$
and for $q|l$ there is an equality
$$\gamma_q(\Ur_l) = \frac{2^l-1}{2^q-1}.$$
\end{theorem}

\textit{Example.} Consider a complex $K = \Delta_{62}^{(2)}$.
Suppose that there exists a nondegenerate map from $K$ to $\Ur_6$.
Then $\gamma_2(K)\leqslant\gamma_2(\Ur_6)$. But one can see that
$\gamma_2(K) = 32$ (since no three vertices of $K$ are colored by
the same paint) and $\gamma_2(\Ur_6) = \frac{2^6-1}{2^2-1} = 21$
by the theorem \ref{genchrombuch}. This contradiction shows that
$\rr(\Delta_{63}^{(2)})\geqslant7$. Note that this cannot be
deduced from estimation~(\ref{logchrom}).

\begin{proof}[Proof of the theorem \ref{genchrombuch}]
We claim that the maximal number of vertices of $\Ur_l$ which can
be colored by the same paint is $2^q-1$. Indeed, all the vertices
colored by the same paint should lie in one $q$-plane of $\Zt^l$.
Otherwise there exist $q+1$ linearly independent vectors which
form a $q$-simplex of $\Ur_l$ colored by the same paint. That
contradicts $q$-regularity of a coloring. Therefore there are at
most $2^q-1$ vertices (which are nonzero vectors) of the same
color. Since there are $2^l-1$ vertices in all, the first
assertion of a theorem follows immediately.

Now we turn to the second statement of the theorem. Let $l =
q\cdot p$. The vector space $\Zt^l$ can be considered as the
$p$-dimensional vector space over the field $\mathbb{F}_{2^q}$. We
color two vertices of $\Ur_l$ by the same paint iff they lie in
the same one-dimensional subspace over $\mathbb{F}_{2^q}$ (thus
representing the same point of the corresponding projective
space). Then any set of vertices colored by a given paint is a
one-dimensional subspace (except zero) of $\mathbb{F}_{2^q}^p$ and
consequently is a $q$-dimensional subspace of $\Zt^l$. This set
does not contain $q+1$ linearly independent vectors. Therefore the
constructed coloring is $q$-regular. There are exactly $2^q-1$
vertices colored by any paint, therefore we have used
$\frac{2^l-1}{2^q-1}$ paints in all. First statement of the
theorem shows that there is no better coloring of $\Ur^l$.
\end{proof}

\textit{Remark.} To prove the second part we have used a fact that
an $l-1$-dimensional projective space over $\Zt$ can be subdivided
into nonintersecting $q-1$-dimensional subspaces. This can be
found in \cite{Hir} in more general situation.

\end{document}